\documentclass[11pt, reqno]{article}
\usepackage[T1]{fontenc}
\usepackage{amsmath,amssymb,amsfonts,amsthm}
\usepackage{color}

\usepackage[colorlinks=true,linkcolor=blue,citecolor=blue,urlcolor=blue,pdfborder={0 0 0}]{hyperref}
\usepackage{cleveref}

\usepackage{caption}
\usepackage{thmtools}

\usepackage{mathrsfs}

\crefname{theorem}{Theorem}{Theorems}
\crefname{thm}{Theorem}{Theorems}
\crefname{lemma}{Lemma}{Lemmas}
\crefname{lem}{Lemma}{Lemmas}
\crefname{remark}{Remark}{Remarks}
\crefname{prop}{Proposition}{Propositions}
\crefname{defn}{Definition}{Definitions}
\crefname{corollary}{Corollary}{Corollaries}
\crefname{conjecture}{Conjecture}{Conjectures}
\crefname{question}{Question}{Questions}
\crefname{chapter}{Chapter}{Chapters}
\crefname{section}{Section}{Sections}
\crefname{figure}{Figure}{Figures}
\crefname{example}{Example}{Examples}

\theoremstyle{plain}
\newtheorem{thm}{Theorem}[section]

\newtheorem{lemma}[thm]{Lemma}
\newtheorem{theorem}[thm]{Theorem}

\newtheorem{corollary}[thm]{Corollary}

\newtheorem{prop}[thm]{Proposition}

\theoremstyle{definition}

\newtheorem{example}[thm]{Example}

\theoremstyle{remark}
\newtheorem{remark}[thm]{Remark}

\numberwithin{equation}{section}

\renewcommand{\P}{\mathbb P}
\newcommand{\E}{\mathbb E}

\newcommand{\Z}{\mathbb Z}

\newcommand{\Q}{\mathbb Q}







\usepackage[margin=1in]{geometry}
\usepackage{tikz}
\usepackage{pgfplots}
\usepackage{extarrows}
\usepackage{titling}
\usepackage{commath}
\usepackage{wasysym}
\usepackage{mathtools}
\usepackage{titlesec}
\newcommand{\eps}{\varepsilon}
\usepackage{bbm}
\usepackage{setspace}
\setstretch{1}
\usepackage{enumitem}
\usepackage{booktabs}

\usepackage[textsize=tiny]{todonotes}

\usepackage{cite}

\newcommand{\Vol}{\operatorname{Vol}}




\newcommand{\tv}{\operatorname{TV}}

\def\P{\mathbb{P}}








\DeclareMathSymbol{\leqslant}{\mathalpha}{AMSa}{"36} 
\DeclareMathSymbol{\geqslant}{\mathalpha}{AMSa}{"3E} 
\DeclareMathSymbol{\eset}{\mathalpha}{AMSb}{"3F}     




\renewcommand{\epsilon}{\varepsilon}








\makeatletter
\pgfdeclareshape{slash underlined}
{
  \inheritsavedanchors[from=rectangle] 
  \inheritanchorborder[from=rectangle]
  \inheritanchor[from=rectangle]{north}
  \inheritanchor[from=rectangle]{north west}
  \inheritanchor[from=rectangle]{north east}
  \inheritanchor[from=rectangle]{center}
  \inheritanchor[from=rectangle]{west}
  \inheritanchor[from=rectangle]{east}
  \inheritanchor[from=rectangle]{mid}
  \inheritanchor[from=rectangle]{mid west}
  \inheritanchor[from=rectangle]{mid east}
  \inheritanchor[from=rectangle]{base}
  \inheritanchor[from=rectangle]{base west}
  \inheritanchor[from=rectangle]{base east}
  \inheritanchor[from=rectangle]{south}
  \inheritanchor[from=rectangle]{south west}
  \inheritanchor[from=rectangle]{south east}
  \inheritanchorborder[from=rectangle]
  \foregroundpath{
    \southwest \pgf@xa=\pgf@x \pgf@ya=\pgf@y
    \northeast \pgf@xb=\pgf@x \pgf@yb=\pgf@y
    \pgf@xc=\pgf@xa
    \advance\pgf@xc by .5\pgf@xb
    \pgf@yc=\pgf@ya
    \advance\pgf@xc by -1.3pt
    \advance\pgf@yc by -1.8pt
    \pgfpathmoveto{\pgfqpoint{\pgf@xc}{\pgf@yc}}
    \advance\pgf@xc by  2.6pt
    \advance\pgf@yc by  3.6pt
    \pgfpathlineto{\pgfqpoint{\pgf@xc}{\pgf@yc}}
    \pgfpathmoveto{\pgfqpoint{\pgf@xa}{\pgf@ya}}
    \pgfpathlineto{\pgfqpoint{\pgf@xb}{\pgf@ya}}
 }
}
\tikzset{nomorepostaction/.code=\let\tikz@postactions\pgfutil@empty}

\makeatother

\pretitle{\begin{flushleft}\Large}
\posttitle{\par\end{flushleft}\vskip 0.5em
}
\preauthor{\begin{flushleft}}
\postauthor{
\\
\vspace{0.5em}
\footnotesize{$*$ The Division of Physics, Mathematics and Astronomy, California Institute of Technology}
\\
Email:
\href{mailto:t.hutchcroft@caltech.edu}{t.hutchcroft@caltech.edu}\\
$\dagger$ Department of Mathematics, Massachusetts Institute of Technology\\
Email: \href{mailto:imlopez@mit.edu}{imlopez@mit.edu}
\end{flushleft}}
\predate{\begin{flushleft}}
\postdate{\par\end{flushleft}}
\title{{\bf A relation between isoperimetry and total variation decay 
with applications to graphs of non-negative Ollivier-Ricci curvature}}

\renewenvironment{abstract}
 {\par\noindent\textbf{\abstractname.}\ \ignorespaces}
 {\par\medskip}

\titleformat{\section}
  {\normalfont\large \bf}{\thesection}{0.4em}{}

\author{{\bf Tom Hutchcroft$^*$ and Isaac M.\ Lopez$^\dagger$}}

\setstretch{1}
\begin{document}

\maketitle

\begin{abstract}
We prove an inequality relating the isoperimetric profile of a graph to the decay of the random walk total variation distance $\sup_{x\sim y} \|P^n(x,\cdot)-P^n(y,\cdot)\|_\mathrm{TV}$.
This inequality implies a quantitative version of a theorem of Salez (GAFA 2022) stating that bounded-degree graphs of non-negative Ollivier--Ricci curvature cannot be expanders. 
Along the way, we prove universal upper-tail estimates for the random walk displacement $d(X_0,X_n)$ and information $-\log P^n(X_0,X_n)$, which may be of independent interest.
\end{abstract}



\section{Introduction}

There is a long and fruitful history of inequalities relating various geometric and spectral features of graphs and groups: see \cite{zheng2022asymptotic} for a survey and e.g.\ \cite{LP:book,KumagaiBook} for more detailed introductions. In this note, we prove a new inequality stating roughly that if the total variation distance between the random walks started at any two neighbors of a graph is small at some time $n$, then the graph must contain sets of reasonable size with small boundary-to-volume ratio in a quantitative sense. This new inequality allows us to make a recent result of Salez \cite{salez2022sparse} on the non-existence of expander sequences of non-negative Ollivier-Ricci curvature fully quantitative, as we explain in \cref{subsec:Ricci}.


\medskip

Let us now give the definitions required to state our theorem. 
    Let $G=(V,E)$ be a connected, locally finite graph, and let $d_G$ denote the graph distance on $G$. Given a set of vertices $W \subseteq V$, we write $\partial W$ for the set of edges with one endpoint in $W$ and the other not in $W$. We write $P^n$ for the $n$-step transition matrix of the lazy simple random walk on $G$ (which stays still with probability $1/2$ at each step) and define the \textbf{total variation profile} $\mathrm{TV}:\mathbb{N} \to [0,\infty)$ of $G$ by
    \begin{align*}
        \mathrm{TV}_n=\mathrm{TV}_{G,n}:= \sup_{x\sim y}\|P^n(x,\cdot)-P^n(y,\cdot)\|_\mathrm{TV} = \sup_{x\sim y} \frac{1}{2}\sum_{z\in V}|P^n(x,z)-P^n(y,z)|,
    \end{align*}
    where $P^n(x,\cdot)$ denotes the law of the $n$th step of a lazy random walk started at $x$.  It is a standard fact that $\|P^n(x,\cdot)-P^n(y,\cdot)\|_\mathrm{TV}=\inf \P(X_n \neq Y_n)$, where the infimum is taken over couplings of the lazy random walks $X$ and $Y$ started at $x$ and $y$, respectively. 
    It is easily seen that $\mathrm{TV}_n$ is monotonically non-increasing in $n$. For transitive graphs, $\operatorname{TV}_n\to 0$ as $n\to\infty$ if and only if $G$ has the \emph{Liouville property}, meaning that there are no non-constant bounded harmonic functions on $G$.
We also define 
\begin{align*}
D^*_n=D^*_{G,n}&:= \sup_{x \in V}\max_{0\leq m \leq n} \mathbf{E}_x [d_G(X_0,X_m)] \qquad \text{ and}\\ H^*_n=H^*_{G,n}&:= \sup_{x\in V} \max_{0\leq m \leq n}  \sum_{y} P^m(x,y) (-\log P^m(x,y)) 
\end{align*}
to be the supremal expected displacement and entropy of the lazy random walk on $G$ run for $n$ steps, respectively, where we write $\mathbf{P}_x$ and $\mathbf{E}_x$ for probabilities and expectations taken with respect to the lazy random walk started at $x$.

\medskip

 Recall that a graph is said to be \textbf{amenable} if $\inf\{|\partial W|/\sum_{w\in W}\deg(w):W\subseteq V$ finite$\}=0$; finite graphs are trivially amenable since one can take $W=V$, which has $\partial W=\emptyset$. It is proven in \cite{salez2022sparse} that every bounded-degree graph with $\inf_n \operatorname{TV}_n=0$ is amenable.  The main result of this paper makes this quantitative.

\begin{theorem}
\label{thm:main}
For each $M<\infty$, there exists a finite constant $C(M)$ such that the following holds.
Let $G=(V,E)$ be an amenable connected graph with degrees bounded by $M$. For each $n\geq 1$, there exists a set $W_n \subseteq V$ with diameter and volume satisfying \[
\operatorname{diam}(W_n) \leq \min\Bigl\{2n, C D_n^* \log \frac{1}{\mathrm{TV}_n}\Bigr\} \quad \text{ and } \quad
\log |W_n| \leq \min\Bigl\{2Mn,C (H^*_n+\log n)\log \frac{1}{\mathrm{TV}_n}\Bigr\}\]
 and whose boundary-to-volume ratio is bounded by $4\mathrm{TV}_n$:  $|\partial W_n|/\sum_{w\in W}\deg(w)\leq 4 \mathrm{TV}_n$.
\end{theorem}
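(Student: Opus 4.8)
The plan is to realise $W_n$ as a super-level set $\{z : g(z) > t^*\}$ of a smoothed indicator $g := P^n\mathbbm{1}_A$, for a carefully chosen finite ``bump set'' $A$ and threshold $t^*$: a co-area argument will give the boundary-to-volume bound, while universal tail estimates for the walk give the diameter and volume bounds. We may assume $0 < \mathrm{TV}_n < 1/4$, since otherwise $W_n = \{x_0\}$ already works (the ratio $|\partial W|/\sum_{w\in W}\deg(w)$ is always at most $1 \le 4\mathrm{TV}_n$) or the stated bounds are vacuous. Fix a vertex $x_0$.

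I would first establish two universal tail estimates for the lazy walk. \emph{Displacement:} $\mathbf P_x(\max_{m \le n} d_G(X_0, X_m) \ge \lambda D_n^*) \le C e^{-c\lambda}$ for $\lambda \ge 1$. This follows from a first-passage-and-restart argument: since $\mathbf E_z\,d_G(z, X_m) \le D_n^*$ for every $m \le n$, Markov's inequality gives $\mathbf P_z(\max_{m\le n} d_G(z, X_m) \ge 3 D_n^*) \le 1/2$ uniformly in the starting point $z$, and by the reverse triangle inequality, reaching distance $\lambda D_n^*$ by time $n$ forces the walk to complete $\gtrsim \lambda$ successive excursions of this kind, each of conditional probability $\le 1/2$ by the strong Markov property. \emph{Information:} $\mathbf P_x(-\log P^m(X_0, X_m) \ge \lambda(H_n^* + \log n))$ is small (a polynomial rate in $\lambda$ will suffice below), which follows from Markov's inequality applied to the nonnegative variable $-\log P^m(X_0, X_m)$ of mean $H_m(x) \le H_n^*$; its near-subadditivity along the walk, coming from $P^{m_1+m_2}(x,z) \ge P^{m_1}(x,w)P^{m_2}(w,z)$, can be used to sharpen the rate if needed.

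Next, set $r := \min\{n,\ C_0 D_n^* \log(1/\mathrm{TV}_n)\}$ with $C_0$ large enough that the displacement estimate gives $P^n(x_0, B(x_0, r)) \ge 3/4$ (automatic when $r = n$), and $\delta := e^{-C_1 H_n^*}$ with $C_1$ large enough that the information estimate gives $P^n(x_0, \{y : P^n(x_0, y) \ge \delta\}) \ge 3/4$. Put $A := B(x_0, r) \cap \{y : P^n(x_0, y) \ge \delta\}$, so that $P^n(x_0, A) \ge 1/2$, $\operatorname{diam}(A) \le 2r$, and $|A| \le 1/\delta = e^{C_1 H_n^*}$. Let $g := P^n\mathbbm{1}_A \colon V \to [0,1]$, which is supported on the $n$-neighbourhood $N_n(A) := \{z : d_G(z, A) \le n\}$. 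Testing the definition of $\mathrm{TV}_n$ against $\mathbbm{1}_A$ shows $|g(x) - g(y)| \le \mathrm{TV}_n$ on every edge $\{x,y\}$; reversibility gives $\sum_x \deg(x) g(x) = \operatorname{vol}(A) := \sum_{a\in A}\deg(a)$; and $g(x_0) \ge 1/2$. The heart of the proof is the \textbf{key inequality}
\[ \sum_{\{x,y\} \in E} |g(x) - g(y)| \ \le\ 2\,\mathrm{TV}_n \cdot \operatorname{vol}(A). \]
Granting it, the co-area identities $\int_0^\infty |\partial\{g > t\}|\,dt = \sum_{\{x,y\}} |g(x)-g(y)|$ and $\int_0^\infty \operatorname{vol}(\{g > t\})\,dt = \sum_x \deg(x) g(x) = \operatorname{vol}(A)$, together with a Markov-type averaging over thresholds $t$ (restricted to $t$ above a cutoff of order $\operatorname{vol}(A)/\operatorname{vol}(N_n(A))$, using $g(x_0) \ge 1/2$ to keep the denominator bounded below), yield a level $t^*$ with $|\partial\{g > t^*\}| \le 4\mathrm{TV}_n \operatorname{vol}(\{g > t^*\})$ and $t^* \gtrsim \operatorname{vol}(A)/\operatorname{vol}(N_n(A))$. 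Set $W_n := \{g > t^*\}$; it contains $x_0$ and has the required boundary-to-volume ratio.

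Finally, the size bounds. If $d_G(z, A) \ge \rho$ then $g(z) = P^n(z, A) \le \mathbf P_z(d_G(z, X_n) \ge \rho) \le C e^{-c\rho/D_n^*}$ by the displacement estimate, so $W_n \subseteq N_\rho(A)$ with $\rho = O(D_n^* \log(1/t^*))$; with $\operatorname{diam}(A) \le 2r$ and the control on $t^*$ (note $\operatorname{vol}(N_n(A)) \le M^{n}\operatorname{vol}(A)$ gives $\log(1/t^*) = O(Mn)$, and a sharper lower bound on $t^*$ gives $\log(1/t^*) = O(\log(1/\mathrm{TV}_n))$), this yields $\operatorname{diam}(W_n) = O\bigl(\min\{n,\, D_n^* \log(1/\mathrm{TV}_n)\}\bigr)$. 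For the volume: if $z \in W_n$ then $\sum_{a\in A}P^n(z,a) > t^*$, so some $a \in A$ has $P^n(a, z) = (\deg z / \deg a) P^n(z, a) \ge t^*/(M|A|)$, and since $\sum_z P^n(a, z) = 1$ this forces $|W_n| \le M|A|^2 / t^*$; hence $\log|W_n| = O(H_n^*) + O(\log(1/t^*)) = O\bigl(\min\{Mn,\, (H_n^* + \log n)\log(1/\mathrm{TV}_n)\}\bigr)$. \textbf{The main obstacle} is the key inequality. The per-edge bound $|g(x) - g(y)| \le \mathrm{TV}_n$ is immediate, but $\operatorname{supp}(g)$ may be exponentially large in $n$, so one cannot merely count edges; the real content is that $P^n$ contracts the $\ell^1$-norm of the gradient so strongly that only $O(\operatorname{vol}(A))$ edges' worth of gradient mass survives. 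I expect this to require combining reversibility with a semigroup/telescoping estimate on $\|\nabla P^k \mathbbm{1}_A\|_{\ell^1(E)}$ as $k$ grows (the case $A=\{x_0\}$ on $\mathbb Z$, where the left side equals $2P^n(x_0,x_0)=2\mathrm{TV}_n$, already shows the bound is sharp up to constants), and possibly a choice of $A$ aided by amenability so that the smoothed-out mass does not leak; making the threshold $t^*$ quantitatively controlled is the second delicate point, and is what produces the $\log(1/\mathrm{TV}_n)$ and $\log n$ factors.
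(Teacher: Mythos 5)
Your strategy---take $g=P^n\mathbbm{1}_A$ for a carefully chosen bump set $A$ and extract a good super-level set via co-area---is a genuinely different route from the paper. The paper instead invokes the Angel--Spinka simultaneous-coupling theorem to couple the lazy walks from every vertex at once (after conditioning on the good event that the displacement and information are not much larger than $D_n^*$ and $H_n^*+\log n$), defines a random partition whose cells are the coincidence classes $\{y: X_{y,n}=X_{x,n}\}$, and then uses a mass-transport identity to show that the expected boundary-to-volume ratio of a volume-biased cell is at most $\widetilde{\mathrm{TV}}_{n,\lambda}+|\partial F|/\deg(F)$, where $F$ is a F\o lner set; the tail estimates then control the diameter and cardinality of every cell directly. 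That said, your proposal as written has a genuine gap, which you flag yourself.

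The key inequality $\sum_{\{x,y\}\in E}|g(x)-g(y)|\le 2\,\mathrm{TV}_n\,\mathrm{vol}(A)$ is not proved, and I do not believe it is true for an arbitrary admissible $A$. Writing $g=\sum_{a\in A}P^n\mathbbm{1}_{\{a\}}$ and using the triangle inequality reduces it to the single-source claim $\sum_{\{x,y\}\in E}|P^n(x,a)-P^n(y,a)|\lesssim\mathrm{TV}_n\deg(a)$ for each $a\in A$. Summing the trivial bound $\sum_a|P^n(x,a)-P^n(y,a)|\le 2\,\mathrm{TV}_n$ over edges $\{x,y\}$ and exchanging sums shows only that
$\sum_a\sum_{\{x,y\}\in E}|P^n(x,a)-P^n(y,a)|\le\mathrm{TV}_n\sum_a\deg(a)$,
i.e.\ the single-source bound holds \emph{on degree-weighted average} over $a$, not pointwise. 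Nothing in your construction of $A$ (a ball around $x_0$ intersected with a set where $P^n(x_0,\cdot)$ is not too small) prevents $A$ from consisting largely of ``bad'' sources $a$ where the gradient mass of $P^n(\cdot,a)$ is anomalously large, so the argument needs an extra selection step to restrict $A$ to good sources, and it is not clear this is compatible with simultaneously enforcing $P^n(x_0,A)\ge 1/2$, the diameter bound, and the cardinality bound.

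A second, smaller issue: even granting the key inequality, the conclusions require a quantitative lower bound $t^*\gtrsim\mathrm{TV}_n^{O(1)}$ (so that $\log(1/t^*)=O(\log(1/\mathrm{TV}_n))$); the cutoff $t\ge\mathrm{vol}(A)/\mathrm{vol}(N_n(A))$ you use only yields $\log(1/t^*)=O(n\log M)$, which gives the trivial $\min\{2n,\cdot\}$ part but not the $D_n^*\log(1/\mathrm{TV}_n)$ and $(H_n^*+\log n)\log(1/\mathrm{TV}_n)$ parts. Showing that most of the mass of $g$ sits at values above $\mathrm{TV}_n^{O(1)}$ is essentially another invocation of the information upper tail estimate (applied after a reversibility flip so that the tail is controlled from the side of $a\in A$), which you would need to carry out explicitly; this is close in spirit to the paper's Theorem~\ref{thm:upper_tail}, which you should consult. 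Similarly, $x_0\in W_n$ is asserted but not forced by the co-area averaging; fortunately, it is not actually needed for the stated conclusions. In short, the ingredients (displacement and information tails) are right, but the key co-area inequality and the threshold control are both unresolved, and the first of these looks like more than a technicality.
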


Informally, this theorem states that if $\operatorname{TV}_n$ is small, then we can find sets of small boundary-to-volume ratio that are not too large, and that the size constraints on these sets can be improved if we have additional information on the random walk's displacement or entropy. The inequality is most interesting in situations where we have better understanding of $\operatorname{TV}_n$ than any other geometric or spectral properties of the graph, as is currently the case for graphs of non-negative Ollivier-Ricci curvature (\cref{subsec:Ricci}). Note that for graphs of small growth, one often gets a better bound on the size of the set using the diameter condition than from the bound involving the entropy.

\begin{remark}
The proof of Theorem~\ref{thm:main} yields an explicit method to construct the sets $W_n$ using a simultaneous near-optimal coupling of the lazy random walks started at every vertex of $G$.
For finite graphs, the proof shows moreover that for each $n\geq 1$, the graph can be \textit{partitioned} into sets of diameter and volume bounded as in the statement of the theorem such that, say, $(100-\eps)\%$ of vertices belong to a cell with boundary-to-volume ratio at most $O(\eps^{-1}\mathrm{TV}_n)$. (For infinite amenable graphs, a similar statement holds with the density of good vertices measured via a F{\o}lner sequence.) In fact, the construction of this partition can be carried out using a randomized algorithm that respects all symmetries of the graph, which can be implemented using a factor-of-iid construction on infinite Cayley graphs and other unimodular random rooted graphs; we do not pursue this here. (The proof is inspired by proofs that various notions of amenability and hyperfiniteness are equivalent for measured equivalence relations \cite[Section 8]{AL07}, where such factor-of-iid considerations can be important.)
\end{remark}

\medskip

Theorem \ref{thm:main} has the following elementary analytic consequence. We state this corollary in terms of the \textbf{isoperimetric profile}
\[
\Phi(n) =\Phi(G,n) := \inf\left\{\frac{|\partial W|}{\sum_{w\in W} \deg(w)} : \sum_{w\in W} \deg(w)\leq n\right\},
\]
which measures how small the boundary-to-volume ratio of a set can be as a function of its volume.

\begin{corollary} \label{isoperimetric-estimate-from-entropy-and-TV}
Let $G$ be an infinite bounded degree graph. If $H^*_n=O(n^\alpha)$ and $\mathrm{TV}_n=O(n^{-\beta})$ for some $\alpha,\beta \in (0,1]$, then the isoperimetric profile of $G$ satisfies
\[
\Phi(n) = O\left( \left(\frac{\log \log n}{\log n}\right)^{\beta/\alpha}\right)
\]
as $n\to\infty$.
\end{corollary}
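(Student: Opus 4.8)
The plan is to feed a single, well-chosen time $k=k(N)$ into \cref{thm:main} and use the resulting set $W_k$ as a competitor in the definition of $\Phi(N)$. Morally one wants $k$ of order $(\log N/\log\log N)^{1/\alpha}$: then the hypotheses give $H^*_k+\log k\lesssim k^{\alpha}\lesssim \log N/\log\log N$ and $\log\tfrac1{\mathrm{TV}_k}\lesssim \log k\lesssim \log\log N$, so the volume bound of \cref{thm:main} reads $\log|W_k|\le C(H^*_k+\log k)\log\tfrac1{\mathrm{TV}_k}\lesssim \log N$, whence $\sum_{w\in W_k}\deg(w)\le M|W_k|\le N$ after adjusting the implied constant in $k$; meanwhile $|\partial W_k|/\sum_{w\in W_k}\deg(w)\le 4\mathrm{TV}_k=O(k^{-\beta})=O\!\big((\log\log N/\log N)^{\beta/\alpha}\big)$, which is exactly the claimed estimate. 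So the corollary is essentially immediate once the choice of $k$ is made robust.

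To make it robust I would not fix $k$ by a formula but instead pick the target level $\tau=\tau(N):=c_0(\log N/\log\log N)^{-\beta/\alpha}$ for a large constant $c_0$ to be determined, and set $k:=\min\{n\ge 1:\mathrm{TV}_n\le\tau\}$. Since $\mathrm{TV}_n\le C_2 n^{-\beta}$ (the hypothesis), this minimum is attained with $k\le\lceil (C_2/\tau)^{1/\beta}\rceil$, which is of order $(\log N/\log\log N)^{1/\alpha}$; and $k\ge 2$ because $\mathrm{TV}_1$ is bounded below by an absolute positive constant (a one-line computation from the definition of the lazy walk), so $\mathrm{TV}_1>\tau$ for $N$ large. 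I would then apply \cref{thm:main} at time $k-1$: by minimality $\mathrm{TV}_{k-1}>\tau$, so $\log\tfrac1{\mathrm{TV}_{k-1}}<\log\tfrac1\tau=O(\log\log N)$; and $k-1\le (C_2/\tau)^{1/\beta}$ gives $H^*_{k-1}+\log(k-1)=O\!\big((k-1)^{\alpha}\big)=O(\log N/\log\log N)$ with an implied constant of size $O(c_0^{-\alpha/\beta})$. Multiplying, $\log|W_{k-1}|\le C(H^*_{k-1}+\log(k-1))\log\tfrac1{\mathrm{TV}_{k-1}}=O\!\big(c_0^{-\alpha/\beta}\log N\big)$, and choosing $c_0$ large (depending only on $M,\alpha,\beta$ and the implied constants of the hypotheses) forces this below $\log N-\log M$; hence $\sum_{w\in W_{k-1}}\deg(w)\le M|W_{k-1}|\le N$ and therefore $\Phi(N)\le 4\mathrm{TV}_{k-1}$.

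The one step I expect to be the real obstacle is bounding $\mathrm{TV}_{k-1}$ from above. If $k-1$ is comparable to its a-priori upper bound $(C_2/\tau)^{1/\beta}$, then $\mathrm{TV}_{k-1}\le C_2(k-1)^{-\beta}=O(\tau)$ and we are done. The danger is that $\mathrm{TV}_n$ drops below $\tau$ much earlier — i.e. decays faster than $n^{-\beta}$ near this scale — so that $k-1$ is small and a priori $\mathrm{TV}_{k-1}$ is only known to be $O((\log N)^{-\beta})$, which is too weak when $\alpha<1$. I would handle this by noting that if $k$ is small then $\mathrm{TV}_j\le\tau$ for all $j\ge k$, so instead one may take $m:=\lfloor(\log N-\log M)/(2M)\rfloor$ and use $W_m$: the crude bound $\log|W_m|\le 2Mm\le \log N-\log M$ of \cref{thm:main} keeps $W_m$ admissible for $\Phi(N)$, and $\mathrm{TV}_m\le\tau$ as soon as $m\ge k$. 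A case split on $k\le m$ versus $k>m$ — using monotonicity of $\Phi(\cdot)$ and of $n\mapsto\mathrm{TV}_n$, and, in the case $k>m$, that $\mathrm{TV}$ is still above $\tau$ at time $m\asymp\log N$ so that $k-1\gtrsim\log N$ — then pins down $\Phi(N)=O(\tau)$ except in one residual scenario: that $\mathrm{TV}_n$ performs a single-step plunge from a value $\asymp(\log N)^{-\beta}$ down to something much smaller around $n\asymp\log N$. Ruling this out amounts to the regularity statement that $\mathrm{TV}_{n+1}/\mathrm{TV}_n$ is bounded below (which is what one expects from the spectral picture, the ratio being non-decreasing in $n$), and this is the point where the argument needs genuine care rather than bookkeeping. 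Everything else — the lower bound on $\mathrm{TV}_1$, the elementary estimates on $H^*_{k-1}$ and $\log\tfrac1{\mathrm{TV}_{k-1}}$, the quantitative choice of $c_0$, and replacing $N$ by $\lceil N/M\rceil$ to absorb the factor $M$ in the volume (harmless since $\log\log N\sim\log\log(N/M)$) — is routine.
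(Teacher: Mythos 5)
You correctly identify the right plan (feed a tuned time $k\asymp(\log N/\log\log N)^{1/\alpha}$ into \cref{thm:main}) and, more importantly, you correctly diagnose the genuine obstacle: the volume bound in \cref{thm:main} involves $\log\frac{1}{\mathrm{TV}_n}$, so controlling $|W_n|$ from above requires a \emph{lower} bound on $\mathrm{TV}_n$, which the hypothesis $\mathrm{TV}_n=O(n^{-\beta})$ does not supply. Your case split handles the case $\alpha=1$ (and, more generally, whenever the fallback $2Mn\le\log N$ beats the target), but for $\alpha<1$ your ``residual scenario'' is real, and your proposed fix --- that $\mathrm{TV}_{n+1}/\mathrm{TV}_n$ is bounded below, or even non-decreasing --- is not established anywhere, and I don't believe it follows from any simple spectral argument: the relevant quantity is the $\ell^1$ norm of $(\delta_x-\delta_y)P^n$, and while $\ell^2$ norms of this signed measure are log-convex in $n$, there is no analogous pointwise or $\ell^1$ statement, and the supremum over the pair $x\sim y$ adds a further complication. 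As written, your argument has a hole precisely where you flag one.

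The fix you actually want is to not treat \cref{thm:main} as a black box but instead to use the one-parameter family of sets that its proof produces: for any $n\ge 1$ and any $\lambda\ge 1$, the construction in \cref{construction-of-subset} yields a set $W_{n,\lambda}$ with $\log|W_{n,\lambda}|\le\lambda(H^*_n+\log n)$ (and $\operatorname{diam}(W_{n,\lambda})\le 2\lambda D^*_n$) whose boundary-to-volume ratio is at most $2\mathrm{TV}_n+Ce^{-c\lambda}$, up to the F\o lner error $|\partial F|/\deg(F)$ which can be taken negligible. The stated theorem is the special case $\lambda\asymp\log\frac{1}{\mathrm{TV}_n}$, chosen so that the two error terms match, but for the corollary you want to decouple them. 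Fix the target $\tau=c_0(\log\log N/\log N)^{\beta/\alpha}$, take $\lambda=\frac{1}{c}\log\frac{1}{\tau}\asymp\log\log N$, and take $n^*\asymp(\log N/\log\log N)^{1/\alpha}$ large enough that $\lambda(H^*_{n^*}+\log n^*)\le\log(N/M)$. Then $|W_{n^*,\lambda}|\le N/M$, and the ratio is at most $2\mathrm{TV}_{n^*}+Ce^{-c\lambda}\le 2C_2(n^*)^{-\beta}+C\tau=O(\tau)$, which is exactly the claimed bound. This avoids any need for a lower bound on $\mathrm{TV}_n$ or any regularity of its decay, and so removes the gap entirely.
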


\begin{example}
We now give some examples to demonstrate the near-optimality of
Theorem~\ref{thm:main}. For $\Z^d$, we have that $D_n^* \asymp n^{1/2}$ and $\operatorname{TV}_n \asymp n^{-1/2}$, and since the ball of radius $r$ contains $r^d$ points, it follows from Theorem~\ref{thm:main} that for every $n\geq 1$, there exists a set of volume at most $n^{d/2} (\log n)^d$ and boundary-to-volume ratio at most $n^{-1/2}$ so that
\[
\Phi(n) =O\left(\frac{\log n}{n^{1/d}}\right);
\]
this bound is larger than the true order $\Phi(n)\asymp n^{-1/d}$ by a $\log n$ factor. 
For nearest-neighbor random walk on the lamplighter group $\Z_2 \wr \Z$, we have that $H^*_n \asymp n^{1/2}$ 
and\footnote{Here is a brief explanation of how to bound $\mathrm{TV}_n$ on the lamplighter group. We can couple the walks started at two different points by first waiting for the two associated walks on $\Z$ to couple at some time $T$, and then waiting for the subsequent single walk on $\Z$ to visit every point in the range of the two walks up to this coupling time, making sure to set the two lamp configurations to be equal wherever the two walks go after the coupling time. If the union of the two ranges has diameter $R$ at time $T$, then the probability that the this range has not yet been covered at time $T+n$ is $1-(1-O(n^{-1/2}))^R$. This last fact is most easily seen by replacing $n$ by a geometric random time of mean $n$ and using the strong Markov property at successive running minima of the walk started from time $T$. Taking a union bound over whether or not $T\leq n/2$, the total probability that the two walks on the lamplighter have failed to couple by time $n$ is of order at most $n^{-1/2}+1-\E[(1-Cn^{-1/2})^{R'}]$ where $R'$ is the range of the two random walks at time $T\wedge (n/2)$. Using Jensen and considering each walk separately we can bound the second term by something of order $n^{-1/2}\E[\max_{0\leq m \leq T \wedge n} |X_m|]$. This expectation can be bounded by writing $\E[\max_{0\leq m \leq T \wedge n} |X_m|]\leq \sum_{k=1}^{\log_2 n} \E[\max_{0\leq m \leq 2^k} |X_m| \mathbbm{1}(T \geq 2^{k-1})]$, and it follows from Cauchy-Schwarz and Doob's $L^2$ maximal inequality that each term in this sum is $O(1)$. This shows that the total variation distance is at most $n^{-1/2}\log n$ as claimed.
} $\mathrm{TV}_n \preceq  n^{-1/2} \log n$, so that Theorem~\ref{thm:main} yields the upper bound
\[
\Phi(n) = O\left(\frac{(\log \log n)^2}{\log n}\right);
\]
this bound is larger than the true order $\Phi(n)\asymp 1/\log n$ by a $(\log\log n)^2$ factor. Our inequalities are vacuous for the higher-dimensional lamplighter groups $\Z_2 \wr \Z^d$ when $d\geq 3$, in which case $\operatorname{TV}_n$ does not decay to zero, and are very weak when $d=2$ and $\operatorname{TV}_n$ decays very slowly. (In each case, the actual order of the isoperimetric profile is $(\log n)^{-1/d}$.) 
\end{example}

\medskip
\noindent \textbf{Other relations between total variation, entropy, isoperimetry, etc.} We now briefly overview the other relationships that are known between the various quantities we consider, referring the reader to e.g.\ \cite{zheng2022asymptotic,hutchcroft2024small} for further background.  First, it follows by standard arguments that
\[
\frac{1}{n}(D_n^*)^2 \preceq H_n^* \preceq D_n^*
\]
for any bounded degree graph: the lower bound is an immediate consequence of the Varopoulos-Carne inequality \cite{Varopoulos1985b, Carne1985} while the upper bound follows easily from the fact that a random variable supported on a set of size $n$ has entropy at most $\log n$. Relations between the displacement, entropy, and isoperimetric profile have typically been established via intermediate quantities including the random walk return probability $\sup_x P^n(x,x)$ and an associated quantity called the \emph{spectral profile}. The known relationships between return probabilities and the isoperimetric and spectral profiles yield, for example, that if $\alpha>0$, then
\[
\left(\Phi(n) \succeq (\log n)^{-\alpha} \right)\Rightarrow \left( - \log P^n(o,o) \preceq n^{1/(1+2\alpha)} \right) \Rightarrow  \left(\Phi(n) \succeq (\log n)^{-2\alpha} \right);
\]
see \cite{hutchcroft2024small} for further discussion of these inequalities and the question of whether they can both be sharp for Cayley graphs. On the other hand, using the fact that the lazy random walk transition matrix is positive semidefinite to bound off-diagonal transition probabilities by return probabilities, we can trivially bound the entropy and displacement in terms of the return probability decay
\[
D_n^*,H_n^* \succeq - \inf_{x\in V} \log P^n(x,x),
\]
where the displacement bound can be improved significantly if the graph has subexponential volume growth. For random walks on groups, Peres and Zheng \cite{peres2020groups} proved complementary inequalities between entropies and return probabilities that imply in particular that
\[
\left(- \log P^n(o,o) = O(n^\beta) \right) \Rightarrow \left( H_n^* = O(n^{\beta/(1-\beta)}) \right)
\]
for every $\beta>0$ (note, however, that this inequality is trivial when $\beta \geq 1/2$). See also \cite{MR1232845} for relations between the isoperimetric profile and \emph{volume growth} of Cayley graphs. The total variation profile has received much less attention than these other quantities, with the relation between the total variation and isoperimetric profiles being the main novelty of this paper. One important known relationship between the total variation profile and the entropy is the inequality
\[
\operatorname{TV}_n \preceq \sqrt{H_{n+1}^*-H_n^*},
\]
which holds for random walks on groups and, in a slightly different formulation, for unimodular random graphs. This inequality has been rediscovered multiple times by different authors \cite{MR3395463,erschler2010homomorphisms}, and plays an important role in Ozawa's functional-analytic proof of Gromov's theorem \cite{ozawa2018functional}. (It appears that using this inequality to bound $\operatorname{TV}_n$ and then applying \cref{thm:main} typically yields worse results than bounding the isoperimetric profile in terms of the entropy via intermediate estimates on the return probability and spectral profile.)


\subsection{Consequences for graphs of non-negative Ollivier-Ricci curvature}
\label{subsec:Ricci}

We now explain the applications of \cref{thm:main} to graphs of non-negative \emph{Ollivier-Ricci curvature} \cite{ollivier-ricci-markov},
 an analogue of the Ricci curvature that is defined for graphs and has recently attracted significant attention due to striking applications in the theory of Markov chain mixing \cite{munch-salez,eldan2017transport,salez2023cutoff,munch2023ollivier}.

\medskip

We begin by giving the relevant definitions.
    Let $(X,d)$ be a metric space, and let $\mu,\nu$ be two probability measures on $X$. The \emph{$L^1$-Wasserstein distance} between $\mu$ and $\nu$ is defined as
    \begin{align}
        \mathcal{W}_1(\mu,\nu)= \inf_{\pi \in \Pi(\mu,\nu)} \int_{X^2} d(x,y)d\pi (x,y),
    \end{align}
    where $\Pi(\mu,\nu)$ is the set of measures on $X\times X$ projecting to $\mu$ and $\nu$.
Let $G=(V,E)$ be a graph such that $V$ is a metric space when equipped with the graph distance $d_G$.
%
    The \textbf{Ollivier-Ricci curvature} of a graph $G$ at an edge $\{x,y\} \in E(G)$ is defined as
    \begin{align}
        \operatorname{Ric}(G)(x,y) = 1-\mathcal{W}_1(P_G(x,\cdot),P_G(y,\cdot)),
    \end{align}
    where $P_G(\cdot,\cdot)$ is the one-step transition matrix of the simple lazy random walk on $G$. We say that $G$ has \textbf{non-negative Ollivier-Ricci curvature} if $\operatorname{Ric}(G)(x,y) \geq 0$ for every $\{x,y\} \in E(G)$. Equivalently, $G$ has non-negative Ollivier-Ricci curvature if the lazy random walks $X$ and $Y$ started at any two vertices can be coupled in such a way that $(d(X_n,Y_n))_{n\geq 0}$ is a supermartingale. Prominent examples of graphs with non-negative curvature include Cayley graphs of abelian groups and, more generally, Cayley graphs taken with respect to conjugation-invariant generating sets (e.g.\ the Cayley graph of the symmetric group generated by the set of all transpositions).



This definition should be compared with the Ricci curvature of a Riemannian manifold, which measures the rate at which geodesics started from two nearby points (defined by parallel transport) coalesce or disperse:  for graphs, the Ollivier--Ricci curvature instead measures the rate at which two \emph{random walks}  coalesce or disperse under an optimal coupling. 

It has long been known that non-negative Ricci curvature strongly restricts the geometry of a Riemannian manifold. A notable example is the \emph{Bishop-Gromov inequality} \cite{petersen-rg}, which states that for a Riemannian manifold $M^n$ with $\operatorname{Ric} \geq 0$, the function
\begin{align}
    \phi_{p,q}(r) = \frac{\Vol(B_M(p,r))}{\Vol(B_{\mathbb{R}^n}(q,r))}
\end{align}
is non-increasing on $(0,\infty)$ for any $(p,q) \in M\times \mathbb{R}^n$. In particular, manifolds of non-negative Ricci curvature must have (sub)polynomial volume growth.
It is natural to ask whether non-negative Ollivier--Ricci curvature implies similarly strong geometric consequences for graphs, and to our knowledge, it remains open whether there exist non-negatively curved, bounded-degree graphs of superpolynomial growth.

Some important work has already been done in this direction. In \cite{salez2022sparse}, Salez proved that sparse, non-negatively curved graphs cannot be expanders, in the sense that these graphs must contain sets of small boundary-to-volume ratio when they are sufficiently large. 
 In \cite{munch-salez}, Münch and Salez proved a quantitative version of this theorem for finite graphs, stating that if $G=(V,E)$ is a finite graph of non-negative Ollivier-Ricci curvature and maximum degree at most $M\geq 2$, then its Cheeger constant must satisfy
 \begin{equation}
 \label{eq:Munch-Salez}
 \Phi:=\min\left\{\frac{|\partial W|}{\sum_{w\in W} \deg(w)} : \sum_{w\in W} \deg(w)\leq \frac{1}{2} \sum_{v\in V} \deg(v)\right\} \leq C \sqrt{\frac{M\log M}{\log |V|}}
 \end{equation}
 for some universal constant $C$. The dependence of this bound on the volume and the degree are essentially optimal in some regimes, since random Cayley graphs of logarithmic degree are expanders \cite{alon1994random}. The same authors also proved 
 diffusive lower bounds for the random walk and related estimates on mixing times under the same hypotheses.



Since the proof of \cite{munch-salez} relies on the analysis of global quantities such as the mixing time, 
it does not yield quantitative estimates on the isoperimetry of sets much smaller than $|V|$, and is not suitable for the analysis of infinite graphs.
Since their proof also establishes the total variation bound
 \[\operatorname{TV}_n \leq \sqrt{\frac{20 M }{(n+1)}}\]
 for all graphs of non-negative Ollivier--Ricci curvature of maximum degree at most $M$ \cite[Page 15]{munch-salez} (see also \cite[Proposition 16]{salez2022sparse}), our main theorem allows for an extension of the M\"unch-Salez estimate to all scales, and in particular for an extension of their estimate to infinite graphs.

\begin{corollary}
    Let $M<\infty$ and let $G$ be a graph with degrees bounded by $M$ and non-negative Ollivier-Ricci curvature. Then the isoperimetric profile $\Phi$ of $G$ satisfies
    \[
    \Phi(n) \leq C \sqrt{ \frac{M\log M}{\log n}}
    \]
    for every $n\geq 2$, where $C$ is a universal constant.
\end{corollary}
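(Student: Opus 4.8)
The plan is to apply \cref{thm:main} at a time $m$ of order $\log_M n$, feeding in the total variation bound $\mathrm{TV}_n \le \sqrt{20M/(n+1)}$ recalled above (valid for every graph of non-negative Ollivier--Ricci curvature and maximum degree at most $M$), and to control the size of the resulting set through the \emph{diameter} clause of the theorem rather than its entropy clause. Before doing so, note two elementary points. Since $\mathrm{TV}_n \le \sqrt{20M/(n+1)} \to 0$, the graph $G$ is amenable by Salez's theorem \cite{salez2022sparse}, so \cref{thm:main} is applicable; we may also assume that $G$ is connected and that $M \ge 2$, passing to a connected component and discarding trivial cases otherwise. And since every vertex has degree at most $M$, a geometric-series estimate gives $|B(x,r)| \le M^{r+1}$ for all $x \in V$ and $r \ge 0$, so every finite set $W$ of diameter at most $D$ satisfies $\sum_{w \in W} \deg(w) \le M\,|W| \le M^{D+2}$.

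Now fix $n$ with $\log n \ge 8\log M$ and put $m := \bigl\lfloor \tfrac{\log n}{2\log M} - 1\bigr\rfloor$; this is a positive integer with $m \le \tfrac{\log n}{2\log M} - 1$ and $m+1 \ge \tfrac{\log n}{4\log M}$. Applying \cref{thm:main} at time $m$ gives a set $W_m$ with $\operatorname{diam}(W_m) \le 2m$ and $|\partial W_m|/\sum_{w\in W_m}\deg(w) \le 4\,\mathrm{TV}_m$. By the volume estimate above and the inequality $2m + 2 \le \log n/\log M$, this set has volume at most $M^{2m+2} \le M^{\log n / \log M} = n$, so it is admissible in the infimum defining $\Phi(n)$; and combining the ratio bound from \cref{thm:main} with the total variation estimate and $m + 1 \ge \tfrac{\log n}{4\log M}$,
\[
\Phi(n) \;\le\; \frac{|\partial W_m|}{\sum_{w\in W_m}\deg(w)} \;\le\; 4\sqrt{\frac{20M}{m+1}} \;\le\; 4\sqrt{\frac{80\,M\log M}{\log n}} \;\le\; 36\sqrt{\frac{M\log M}{\log n}}.
\]

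For the complementary range $2 \le n < M^8$ the right-hand side of the asserted inequality is at least $36\sqrt{M/8} \ge 1$, while $\Phi(n) \le 1$ whenever the infimum defining it is over a non-empty family (as it is, e.g., once $n \ge M$, by taking $W$ to be a single vertex, whose boundary-to-volume ratio is at most $1$); so the corollary holds with the universal constant $C = 36$.

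The one point requiring care is the choice of scale $m \asymp \log_M n$ together with the use of the deterministic bound $\operatorname{diam}(W_m) \le 2m$ to control $|W_m|$: the only general estimate available for the entropy here is $H^*_m \le (m+1)\log M$, so bounding the volume through the entropy term of \cref{thm:main} (as in \cref{isoperimetric-estimate-from-entropy-and-TV}) would lose a factor of order $\sqrt{\log\log n}$ and would not recover the $\sqrt{M\log M/\log n}$ form of the Münch--Salez estimate. Given the diameter bound, everything else is a short computation with explicit constants.
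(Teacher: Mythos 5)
Your proof is correct, and it follows exactly the route the paper intends but does not spell out: apply \cref{thm:main} with the M\"unch--Salez total variation bound $\mathrm{TV}_m \le \sqrt{20M/(m+1)}$ at a time scale $m \asymp \log_M n$, using the deterministic diameter bound $\operatorname{diam}(W_m) \le 2m$ (rather than the entropy clause) to force $\sum_{w \in W_m}\deg(w) \le n$. Your remark at the end is the key observation that recovers the exact M\"unch--Salez form $\sqrt{M\log M/\log n}$ without a spurious $\log\log n$ factor; the constant bookkeeping, the amenability check via Salez's qualitative theorem, and the treatment of the small-$n$ range (modulo the standing convention that $\Phi(n)$ is only considered once the defining infimum is non-vacuous) are all fine.
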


As before, the example of random abelian Cayley graphs \cite{alon1994random} shows that this estimate is essentially optimal (up to the $\sqrt{\log M}$ factor) in the high-degree regime. On the other hand, it is plausible that much stronger estimates hold when the degrees are fixed and $n\to\infty$.

\section{Upper tail estimates for the random walk displacement and information}
\label{sec:upper_tail}
The goal of this section is to prove the following theorem, which states that the random walk displacement $d_G(X_0,X_n)$ and information $-\log P^n(X_0,X_n)$ are exponentially unlikely to exceed their expected values by a large multiplicative factor.

\begin{theorem}
\label{thm:upper_tail}
Let $G=(V,E)$ be a graph with degrees bounded by $M$. There exists a posiitve constant $c=c(M)$ such that
\[
\mathbf{P}_x( d_G(X_0,X_n) \geq \lambda D_n^*) \leq e^{-c\lambda} \; \text{ and } \; \mathbf{P}_x\left( -\log P^n(X_0,X_n) \geq \lambda (H_n^* + \log n) \right) \leq e^{-c\lambda}
\]
for every $x\in V$, $n\geq 2$, and $\lambda \geq 1$.
\end{theorem}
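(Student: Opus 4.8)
The plan is to prove both tail bounds by the same mechanism: a multi-step "restart" argument exploiting the submultiplicative/subadditive behavior of displacement and information under composition of the random walk, combined with a Markov-type inequality at each scale.

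I would begin with the displacement bound. The key observation is that $d_G(X_0, X_n) \le \sum_{j} d_G(X_{(j-1)k}, X_{jk})$ whenever we break $[0,n]$ into blocks of length $k$, by the triangle inequality. However, the sharper route is a first-moment argument iterated dyadically. Fix $n$ and let $D = D_n^*$. By definition of $D_n^*$ as a supremum over starting points and over $m \le n$, the Markov property gives that for any starting point, $\mathbf{E}[d_G(X_0, X_m) \mid \mathcal{F}_{\text{past}}] \le D$ for all $m \le n$. Thus $d_G(X_0, X_n)$ is dominated (in a suitable stochastic sense) by a sum of "increments" each of conditional expectation $\le D$. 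The cleanest implementation: consider the walk at times $0, m, 2m, \dots$ and observe that $d_G(X_0, X_{\ell m}) \le \sum_{j=1}^{\ell} d_G(X_{(j-1)m}, X_{jm})$, where each summand, conditionally on the past, has expectation at most $D_m^* \le D_n^*$. Then I would use a Bernstein/Chernoff-style bound for sums of conditionally-bounded-mean (but unbounded) nonnegative increments — more precisely, show that each increment has an exponential moment bounded by a constant, which follows because on a bounded-degree graph $d_G(X_{(j-1)m}, X_{jm}) \le m$ deterministically, so combined with mean $\le D$ and $m$ chosen comparably to $D$ we get a uniform exponential moment bound. Choosing $m \asymp D_n^*$ and $\ell \asymp n/D_n^*$, the event $\{d_G(X_0,X_n) \ge \lambda D_n^*\}$ becomes a large-deviation event for a sum of $\ell$ such increments exceeding $\lambda$ times its typical scale, yielding $e^{-c\lambda}$.

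For the information bound I would run the analogous argument using subadditivity of information along the walk. The quantity $-\log P^n(X_0, X_n)$ satisfies, via the Markov property and the chain rule, a decomposition $-\log P^n(x, X_n) \le \sum_{j} \big(-\log P^{m}(X_{(j-1)m}, X_{jm})\big)$ only up to subtleties; more robustly, one uses that $\mathbf{E}_x[-\log P^m(x, X_m)] = H^*$-type quantity and that the conditional law of $-\log P^m(X_{(j-1)m}, X_{jm})$ given the past has expectation at most $H_m^* \le H_n^*$. The role of the additive $\log n$ correction is to absorb the deterministic upper bound $-\log P^m(x,y) \le C m \log M$ needed for the exponential-moment estimate on each block (since in $m$ lazy steps on a degree-$\le M$ graph, any reachable point has probability at least $(2M)^{-m}$, so $-\log P^m \le m \log(2M)$), and to handle the base case where $H_n^*$ itself could be as small as order $\log n$. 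With $m \asymp H_n^* + \log n$ and $\ell \asymp n/m$ blocks, the same Chernoff argument closes it.

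The main obstacle I anticipate is making the "sum of conditionally-mean-bounded increments" step rigorous and getting a clean exponential moment. The increments are not independent, so one needs a martingale/supermartingale formulation: define $M_j = \exp\!\big(\theta \sum_{i \le j} Z_i - j \psi(\theta)\big)$ where $Z_i$ is the $i$-th block increment and $\psi(\theta)$ bounds $\log \mathbf{E}[e^{\theta Z_i} \mid \mathcal{F}_{(i-1)m}]$ uniformly, then check $M_j$ is a supermartingale and apply optional stopping. The delicate point is choosing the block length $m$ in the right proportion to $D_n^*$ (resp. $H_n^* + \log n$) so that the deterministic bound $Z_i \le m$ (resp. $Z_i \le m\log(2M)$) and the mean bound $\mathbf{E}[Z_i \mid \cdot] \le D_n^*$ (resp. $\le H_n^*$) together give $\psi(\theta) \le C\theta D_n^*$ for a fixed $\theta = \theta(M) > 0$ — this requires $m$ and $D_n^*$ to be comparable, which is exactly why we take $m \asymp D_n^*$; one must also handle the rounding when $D_n^* > n$ or $D_n^* < 1$ as degenerate cases. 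A secondary technical point is verifying the conditional mean bounds hold with the *lazy* walk and with the correct treatment of $\max_{0\le m'\le m}$ inside the definitions of $D_n^*$ and $H_n^*$, but these follow directly from the Markov property and the stated definitions.
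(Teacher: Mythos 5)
Your block-decomposition strategy has a scale mismatch that makes the resulting bound vacuous in the regime that matters, so the proposal does not go through as written.

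Concretely: fixing a block length $m\asymp D_n^*$, you bound $d_G(X_0,X_n)\le\sum_{j=1}^{\ell}Z_j$ with $Z_j:=d_G(X_{(j-1)m},X_{jm})$ and $\ell\asymp n/D_n^*$. Each $Z_j$ has conditional mean $\lesssim D_n^*$ and is deterministically $\le m\asymp D_n^*$, which indeed gives a uniform bound $\log\mathbf{E}[e^{\theta Z_j}\mid\mathcal F_{(j-1)m}]\le\psi(\theta)\lesssim \theta D_n^*$ for $\theta\asymp 1/D_n^*$. But then the supermartingale/Chernoff step produces
\[
\mathbf P\Bigl(\textstyle\sum_j Z_j\ge t\Bigr)\le \exp\bigl(-\theta t+\ell\,\psi(\theta)\bigr)\le\exp\bigl(-\theta t+C\theta n\bigr),
\]
because $\ell\,\psi(\theta)\asymp (n/D_n^*)\cdot\theta D_n^*\asymp\theta n$. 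The right side is $\ge 1$ unless $t\gtrsim n$, whereas you need the bound at the threshold $t=\lambda D_n^*$, which for fixed $\lambda$ can be far smaller than $n$ (on $\mathbb Z^d$, $D_n^*\asymp\sqrt n$). Put differently: $\sum_j Z_j$ has typical size $\asymp n$, so $\{\sum_j Z_j\ge\lambda D_n^*\}$ is not a tail event at all; the triangle-inequality decomposition is simply too lossy. The same scale issue afflicts your information decomposition $-\log P^n(X_0,X_n)\le\sum_j\bigl(-\log P^m(X_{(j-1)m},X_{jm})\bigr)$, whose right-hand side again has mean $\asymp n$ rather than $\asymp H_n^*+\log n$.

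The paper's mechanism sidesteps this by replacing fixed blocks with \emph{stopping times}: $\tau_0=0$ and $\tau_i=\inf\{t\ge\tau_{i-1}:d(X_t,X_{\tau_{i-1}})\ge R\}$ with $R\asymp\sup_y\mathbf E_y[\max_{m\le n}d(X_0,X_m)]$. The event $\{\max_{m\le n}d(X_0,X_m)\ge k(R+A)\}$ (here $A$ is the maximal edge length in the metric) forces $\tau_1,\dots,\tau_k\le n$, and by the strong Markov property each increment $\tau_i-\tau_{i-1}\le n$ has probability at most $1/e$ by Markov's inequality, by the choice of $R$. Crucially, the number of $\tau_i$'s that fire before time $n$ is typically $O(1)$, not $n/R$, so nothing of order $n$ accumulates and one gets $e^{-k}$ directly (\cref{lem:Green_metric_upper_tail}). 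Two further ingredients are then needed that your plan does not anticipate: a comparison of $\sup_y\mathbf E_y[\max_{m\le n}d(X_0,X_m)]$ with $\sup_y\max_{m\le n}\mathbf E_y[d(X_0,X_m)]$ up to a factor depending only on $M$ (\cref{lem:triangle} and \cref{cor:expectation_and_median}); and, for the information statement, the observation that $-\log P^n(x,y)$ is not itself a metric, which the paper remedies by using the massive Green metric $-\log\mathbf G_t(x,y)$ (a genuine metric since $\mathbf G_t(u,v)\ge\mathbf G_t(u,w)\mathbf G_t(w,v)$) and then transferring tail bounds back to $-\log P^n$ via \cref{lem:information_to_Green}, which is where the $\log n$ correction enters. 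I would encourage you to replace the fixed-block Chernoff step with a stopping-time/renewal-type argument of this kind; the rest of your outline (the reduction to a general metric statement, the role of bounded degree, and the exponential moment bookkeeping) is compatible with that fix.
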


We will deduce both statements from the following more general statement about arbitrary metrics on the graph. The statement about the random walk information will be deduced by applying this lemma to the \emph{massive Green metric}, defined below.

\begin{prop}
\label{prop:Green_metric_upper_tail_strong}
Let $G=(V,E)$ be a locally finite graph with degrees bounded by $M\geq 2$ and let $d$ be a metric on $V$. There exists a universal positive constant $c$ such that
\[\mathbf{P}_x\left(\max_{0\leq m \leq n} d(X_0,X_m) \geq \lambda \sup_y \max_{0\leq m \leq n} \mathbf{E}_y \left[d(X_0,X_m)\right]  \right) \leq \exp\left[1-\frac{c \lambda}{M^2\log M}\right]\]
for every $x\in V$,  $\lambda \geq 1$, and integer $n\geq 1$. 
\end{prop}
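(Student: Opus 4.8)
The plan is to prove the proposition by an iterated ``restart'' argument, reducing the maximal displacement to a sum of controlled increments whose number has a geometric tail. Write $D := \sup_y \max_{0\le m\le n}\mathbf{E}_y[d(X_0,X_m)]$ and assume $0<D<\infty$: if $D=\infty$ the left side vanishes since the walk visits only finitely many vertices in $n$ steps, and $D=0$ happens only when $G$ is edgeless (where everything is constant). The only place the degree bound enters is the elementary observation that a single step of the walk cannot be too long in the metric $d$: for any vertex $z$ with a neighbour, $\mathbf{E}_z[d(z,X_1)] = \frac{1}{2\deg(z)}\sum_{u\sim z}d(z,u) \ge \frac{1}{2M}\max_{u\sim z}d(z,u)$, and since $\mathbf{E}_z[d(z,X_1)]\le D$ this gives $\rho := \sup\{d(z,u): z\sim u\} \le 2MD$ (supremum over the component of $x$).

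The heart of the argument is the following claim: \emph{for every vertex $y$ and every $0\le \ell\le n$, $\mathbf{P}_y(\max_{0\le m\le \ell}d(y,X_m)\ge 6D)\le \tfrac12$.} The naive approach --- bound $\mathbf{P}_y(d(y,X_m)\ge 6D)$ for each $m$ by Markov and union over $m$ --- fails badly because of the maximum, and I expect getting around this to be the main obstacle. The trick I would use is to compare the walk at the crossing time with the walk at the \emph{endpoint} time $\ell$: let $\sigma=\inf\{m: d(y,X_m)\ge 6D\}$, and split $\{\sigma\le\ell\}$ according to whether $d(y,X_\ell)\ge 3D$ or not. On the first event $d(y,X_\ell)\ge 3D$ directly, and Markov gives probability $\le D/(3D)=1/3$. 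On the second event the triangle inequality forces $d(X_\sigma,X_\ell)\ge 3D$; applying the strong Markov property at $\sigma$ and Markov's inequality to the fresh walk from $X_\sigma$ --- which runs for $\ell-\sigma\le n$ further steps, hence has mean displacement $\le D$ --- bounds this by $\tfrac13\mathbf{P}_y(\sigma\le\ell)$. Rearranging $\mathbf{P}_y(\sigma\le\ell)\le \tfrac13 + \tfrac13\mathbf{P}_y(\sigma\le\ell)$ yields the claim.

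To finish, I would define stopping times $\tau_0=0$ and $\tau_j=\inf\{m\ge\tau_{j-1}: d(X_{\tau_{j-1}},X_m)\ge 6D\}$, and let $N=\max\{j:\tau_j\le n\}$. Each completed increment satisfies $d(X_{\tau_{j-1}},X_{\tau_j}) < 6D+\rho \le (6+2M)D$ (the value just before $\tau_j$ is $<6D$, plus one step of size $\le\rho$), so the triangle inequality along the checkpoints gives $\max_{0\le m\le n}d(X_0,X_m) < N(6+2M)D + 6D$. On the other hand the strong Markov property together with the claim (applied with horizon $n-\tau_{j-1}\le n$) gives $\mathbf{P}_x(\tau_j\le n\mid\mathcal{F}_{\tau_{j-1}})\le\tfrac12$ on $\{\tau_{j-1}\le n\}$, so $\mathbf{P}_x(N\ge k)\le 2^{-k}$. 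Combining, for $\lambda\ge 1$ (the case $\lambda\le 6$ being absorbed into the constant since then the right-hand side exceeds $\mathbf{P}\le 1$ once $c$ is small),
\[
\mathbf{P}_x\Bigl(\max_{0\le m\le n}d(X_0,X_m)\ge\lambda D\Bigr) \le \mathbf{P}_x\Bigl(N > \tfrac{\lambda-6}{6+2M}\Bigr) \le 2^{-(\lambda-6)/(6+2M)},
\]
which is at most $\exp[1-c\lambda/(M^2\log M)]$ for a suitable universal $c>0$ by an elementary comparison of exponents (in fact this already yields the stronger bound $\exp[-c'\lambda/M]$, and $M\le M^2\log M$ for $M\ge 2$). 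The only routine points left are this last elementary estimate and the bookkeeping with the $\min$/$\max$ over $m$ and over $j$.
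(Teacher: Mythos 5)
Your proof is correct and takes a genuinely different route from the paper's. The paper first proves (Lemma~2.3) an exponential tail bound in terms of the larger quantity $\mathbf{M}_n := \sup_y \mathbf{E}_y[\max_{0\leq m\leq n} d(X_0,X_m)]$, where the max sits inside the expectation so that a single Markov application at level $e\mathbf{M}_n$ already yields the needed success probability $1/e$ for each stopping-time increment. It then needs a separate comparison (Lemma~2.4 plus Corollary~2.5) to show that $D := \sup_y\max_{0\leq m\leq n}\mathbf{E}_y[d(X_0,X_m)]$ is within a factor $O(M\log M)$ of $\mathbf{M}_n$, which is where the paper loses the extra $M\log M$ in the exponent. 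You instead build the Etemadi/L\'evy--Ottaviani reflection argument directly into the single-increment estimate: your key claim that $\mathbf{P}_y(\max_{0\leq m\leq \ell} d(y,X_m)\geq 6D)\leq \tfrac12$ follows from Markov's inequality at the endpoint time $\ell$ together with the strong Markov property at the crossing time $\sigma$ and a triangle inequality, and this is already in terms of $D$ rather than $\mathbf{M}_n$. The iterated-stopping-time argument and the one-step bound $\rho\leq 2MD$ (which mirrors the paper's bound $A\leq 2M\mathbf{M}_n$) then close the argument. This is cleaner, avoids the detour through $\mathbf{M}_n$ entirely, and yields the sharper tail $\exp[-c\lambda/M]$, which dominates the stated $\exp[1-c\lambda/(M^2\log M)]$; the reflection idea itself is the same as that underlying the paper's Lemma~2.4, you have just integrated it more economically into the overall scheme.

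Two small things worth flagging but not gaps: first, as you note, the argument (and the proposition as stated) silently excludes the degenerate case of an edgeless graph, where $D=0$ and the left-hand side equals $1$ for all $\lambda$; the paper has the same implicit assumption. Second, in the iteration one should make explicit that the claim is applied, via the strong Markov property at $\tau_{j-1}$, with the random horizon $\ell = n - \tau_{j-1} \leq n$; you do say this in passing, and it is a routine point, but it is the reason the claim needs to be stated uniformly over all $0\leq\ell\leq n$ and not just $\ell = n$.
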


Note that this lemma holds vacuously if the supremum inside the probability is infinite. We begin by proving the following slight variation on this proposition.

\begin{lemma}
\label{lem:Green_metric_upper_tail}
Let $G=(V,E)$ be a locally finite graph with degrees bounded by $M\geq 2$ and let $d$ be a metric on $V$.
Then
\[\mathbf{P}_x\left(\max_{0\leq m \leq n} d(X_0,X_m) \geq \lambda \sup_y \mathbf{E}_y \left[\max_{0\leq m \leq n} d(X_0,X_m)\right] \right) \leq \exp\left[-\left\lfloor \frac{\lambda}{M+e}\right\rfloor \right]\]
for every $x\in V$,  $\lambda \geq 1$, and integer $n\geq 1$. 
\end{lemma}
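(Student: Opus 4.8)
We want to show a maximal-inequality-type exponential tail: the running maximum of $d(X_0, X_m)$ over $m \le n$ is exponentially unlikely to exceed $\lambda$ times its supremal mean. Let me denote $A := \sup_y \mathbf{E}_y[\max_{0 \le m \le n} d(X_0, X_m)]$, assumed finite. The natural approach is a stopping-time / strong Markov "restart" argument: if the walk has traveled a large distance, we can decompose that journey into roughly $\lambda/(M+e)$ independent-ish stages, each of which requires the walk to travel a distance of order $A$ (or at least a fixed fraction thereof), and each such stage has probability bounded away from $1$ by Markov's inequality. Multiplying these probabilities gives the geometric decay $e^{-\lfloor \lambda/(M+e)\rfloor}$.

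**Key steps.**

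First, I would apply Markov's inequality at a single scale: for any starting vertex $y$, $\mathbf{P}_y(\max_{0 \le m \le n} d(X_0, X_m) \ge (M+e) A) \le \frac{A}{(M+e)A} = \frac{1}{M+e} \le \frac{1}{2}$. Actually, since we want the constant $M+e$ rather than $2$ in the denominator of the exponent, I should be more careful: set $R := (M+e)A$ — hmm, that doesn't immediately give probability $\le 1/(M+e)$. Let me reconsider: by Markov, $\mathbf{P}_y(\max_m d(X_0,X_m) \ge tA) \le 1/t$. So to make each stage succeed with probability $\le 1/(M+e)$ we would advance by $(M+e)A$ each time; but then traveling distance $\lambda A$ requires only $\lambda/(M+e)$ stages, and by the triangle inequality each stage contributes at most the distance traveled within it. This is the core mechanism.

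Concretely: define stopping times $\tau_0 = 0$ and $\tau_{k+1} = \inf\{ m > \tau_k : d(X_{\tau_k}, X_m) \ge (M+e)A \} \wedge n$ — wait, I need to be careful that the walk does not overshoot by too much in a single step, which is where the factor $M$ and the lazy structure enter: in one step the walk moves to a neighbor (or stays), so $d$ can jump, but only along an edge. Hmm — actually $d$ is an arbitrary metric, so one edge-step could move $d$-distance a lot. Let me instead not worry about overshoot and just use: on the event that $\max_{0\le m\le n} d(X_0, X_m) \ge \lambda A$, the triangle inequality forces at least $\lfloor \lambda/(M+e) \rfloor$ of the "increments" $d(X_{\tau_{k-1}}, X_{\tau_k})$ to be $\ge (M+e)A$ — or rather, forces the walk to have passed through at least that many disjoint stages each of $d$-diameter contribution $\ge (M+e)A$... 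I think the cleanest formulation is: let $N$ be the largest $k$ such that $\tau_k < n$; then $\max_{0\le m \le n} d(X_0,X_m) \le \sum_{k=1}^{N+1} \max_{\tau_{k-1} \le m \le \tau_k} d(X_{\tau_{k-1}}, X_m)$, and on each stage (except possibly the last, truncated one) the increment was at least... no. Let me just go with: $\tau_{k+1} = \inf\{m : d(X_{\tau_k}, X_m) \ge (M+e)A\}$, so that $\{ \max_{0 \le m \le n} d(X_0, X_m) \ge \lambda A\} \subseteq \{\tau_{\lceil \lambda/(M+e)\rceil} \le n\}$ roughly, and then by the strong Markov property $\mathbf{P}_x(\tau_{k+1} \le n \mid \mathcal{F}_{\tau_k}) \le \mathbf{P}_{X_{\tau_k}}(\max_{0\le m \le n} d(X_0, X_m) \ge (M+e)A) \le 1/(M+e)$ on $\{\tau_k \le n\}$. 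Iterating gives $\mathbf{P}_x(\tau_j \le n) \le (M+e)^{-j}$, and then the exponential bound follows since $(M+e)^{-j} \le e^{-j}$ and we take $j = \lfloor \lambda/(M+e)\rfloor$.

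**Main obstacle.**

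The main subtlety I anticipate is the measurability and the exact combinatorial bookkeeping relating the event $\{\max_m d(X_0,X_m) \ge \lambda A\}$ to the event that sufficiently many stopping times $\tau_k$ occur before time $n$ — in particular handling the triangle-inequality accounting so that $\lambda A$ of total displacement genuinely forces $\lfloor \lambda/(M+e)\rfloor$ completed stages, being careful about the $\wedge n$ truncation and about whether we compare $d(X_0, X_m)$ or $d(X_{\tau_k}, X_m)$. Everything else — Markov's inequality at a single scale, the strong Markov property for the lazy walk, the passage from $(M+e)^{-j}$ to $e^{-j}$ — is routine. The lazy structure and degree bound $M$ actually play essentially no role in this particular lemma beyond appearing in the constant; they will matter when upgrading Lemma~\ref{lem:Green_metric_upper_tail} to Proposition~\ref{prop:Green_metric_upper_tail_strong}, where one must replace $\mathbf{E}_y[\max_m d(X_0,X_m)]$ by $\max_m \mathbf{E}_y[d(X_0,X_m)]$ and pay the $M^2 \log M$ price — but that is the next step, not this one.
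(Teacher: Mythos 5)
Your plan --- iterated stopping times at a fixed displacement threshold, a Markov-inequality bound on the first such stopping time, and the strong Markov property to multiply failure probabilities --- is exactly the paper's approach. But your write-up has a gap at precisely the point you flagged and then chose to set aside: the overshoot.

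You correctly notice that $d$ is an arbitrary metric, so a single edge-step can increase $d(X_{\tau_k},X_m)$ by a lot, and then say ``Let me instead not worry about overshoot and just use: $\tau_{k+1} = \inf\{m : d(X_{\tau_k}, X_m) \ge (M+e)A\}$, so that $\{\max_{0\le m\le n} d(X_0,X_m) \ge \lambda A\} \subseteq \{\tau_{\lceil\lambda/(M+e)\rceil} \le n\}$ roughly.'' This inclusion is exactly what fails without overshoot control. Writing $A_{\mathrm{edge}} := \sup_{u\sim v} d(u,v)$ and taking any $m\le n$ with $\tau_i\le m<\tau_{i+1}$, the triangle inequality only gives
\[
d(X_0,X_m) \;\le\; \sum_{\ell=1}^{i} d(X_{\tau_{\ell-1}},X_{\tau_\ell}) + d(X_{\tau_i},X_m) \;<\; i\bigl((M+e)A + A_{\mathrm{edge}}\bigr) + (M+e)A,
\]
since each completed stage may overshoot its threshold by up to one edge length. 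If $A_{\mathrm{edge}}$ is not bounded in terms of $A=\sup_y\mathbf{E}_y[\max_{0\le m\le n}d(X_0,X_m)]$, having only a few $\tau_k$'s occur before time $n$ does not cap the running maximum, so $\{\max_m d(X_0,X_m)\ge\lambda A\}$ need not force $\tau_j\le n$ for $j$ comparable to $\lambda$.

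The observation that closes this gap --- and the one place in this lemma where the laziness and the degree bound $M$ genuinely matter, contrary to your closing remark that they ``play essentially no role beyond appearing in the constant'' --- is the bound $A_{\mathrm{edge}}\le 2M\,A$. Indeed, for any edge $u\sim v$ the lazy walk started at $u$ satisfies $\mathbf{P}_u(X_1=v)\ge 1/(2M)$, hence $A \ge \mathbf{E}_u[\max_{0\le m\le n}d(X_0,X_m)] \ge \mathbf{E}_u[d(X_0,X_1)] \ge d(u,v)/(2M)$ for every $n\ge 1$; taking the supremum over edges gives $A_{\mathrm{edge}}\le 2M A$. With this in hand, the overshoot per stage is at most an extra $2MA$, so $k$ completed stages cap the running maximum at $k(R + A_{\mathrm{edge}})$; choosing the threshold $R = eA$ so that Markov gives each stage success probability at most $e^{-1}$, one gets $\mathbf{P}_x(\max_m d(X_0,X_m)\ge k(e+2M)A)\le e^{-k}$, and taking $k=\lfloor\lambda/(\mathrm{const}\cdot M)\rfloor$ recovers the stated exponential tail. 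Once you reinstate this edge-length bound, your argument is correct and coincides with the paper's.
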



\begin{proof}[Proof of \cref{lem:Green_metric_upper_tail}]
 Set $A=\sup_{u\sim v} d(u,v)$, let $k\geq 1$ be an integer, and let $R >0$ be a parameter. Let the stopping times $\tau_0,\tau_1,\ldots$ be defined inductively by $\tau_0=0$ and 
\[
\tau_i = \inf\{ t\geq \tau_{i-1} : d(X_t,X_{\tau_{i-1}}) \geq R\},
\]
 setting $\tau_i=\infty$ if the set being infimized over is empty. At the time $\tau_i$, we must have $R\leq d(X_{\tau_i},X_{\tau_{i-1}})\leq R+A$ by definition of the constant $A$. As such, we have that
 \[
 \mathbf{P}_x\left( \max_{0\leq m \leq n} d(X_0,X_m) \geq k (R+A)\right) \leq \mathbf{P}_x(\tau_1,\ldots,\tau_k \leq n) \leq \left[\sup_y \mathbf{P}_y(\tau_1 \leq n)\right]^k,
 \]
 where the final inequality follows by the strong Markov property. Now, if we take the radius $R$ to be $R=e\sup_y \mathbf{E}_y [\max_{0\leq m \leq n} d(X_0,X_m)]$, then it follows by Markov's inequality that $\sup_y \mathbf{P}_y(\tau_1 \leq n)\leq e^{-1}$ and hence that
 \[
  \mathbf{P}_x\left( \max_{0\leq m \leq n} d(X_0,X_m) \geq k \left(e\sup_y \mathbf{E}_y \left[\max_{0\leq m \leq n} d(X_0,X_m)\right]+A\right)\right) \leq e^{-k}
 \]
 for every integer $k\geq 1$. Note that this form of the inequality does not require the degrees to be bounded. This inequality is easily seen to yield the claim since, by definition, $A=\sup_{x\sim y} d(u,v) \leq 2M\sup_y \mathbf{E}_y [\max_{0\leq m \leq n} d(X_0,X_m)]$ for every $n\geq 1$. 
\end{proof}

To deduce \cref{prop:Green_metric_upper_tail_strong} from \cref{lem:Green_metric_upper_tail}, we need to show that the max can be taken out of the expectation $\sup_x\mathbf{E}_x[\max_{0\leq m \leq n} d(X_0,X_m)]$
without changing its order. This will be deduced from the following simple lemma.

\begin{lemma}
\label{lem:triangle}
Let $G=(V,E)$ be a locally finite graph and let $d$ be a metric on $V$. For each $n\geq 1$ and $r\geq 1$, we have that
\[
\sup_x \max_{0\leq m \leq n} \mathbf{P}_x\bigl(d(X_0,X_m)\geq r\bigr) \geq \frac{1}{2} \sup_x\mathbf{P}_x\Bigl(\max_{0\leq m \leq n} d(X_0,X_m) \geq 2r\Bigr).
\]
\end{lemma}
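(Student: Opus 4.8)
The plan is to use a first-moment / hitting-time decomposition at the first time the walk reaches distance $2r$ from its start, together with the triangle inequality. Let $T = \inf\{m \geq 0 : d(X_0,X_m) \geq 2r\}$ and suppose $T \leq n$. On the event $\{T \leq n\}$ we have $d(X_0,X_T) \geq 2r$, so by the triangle inequality, for any $m$ with $0 \leq m \leq n$ we have $d(X_0,X_m) \geq r$ \emph{or} $d(X_m,X_T) \geq r$. The idea is to apply this with the particular choice $m = n$ (or more precisely $m = n \wedge$ something): either $d(X_0,X_n) \geq r$, or $d(X_n, X_T) \geq r$, and in the latter case we want to re-root the walk. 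The cleanest version: condition on $X_0$ and consider whether $d(X_0, X_{n}) \geq r$ directly versus going via the intermediate point $X_T$.

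First I would write, for fixed starting point $x$,
\[
\mathbf{P}_x\!\left(\max_{0\leq m\leq n} d(X_0,X_m)\geq 2r\right) = \mathbf{P}_x(T\leq n),
\]
and then split according to whether $d(X_0,X_n)\geq r$. If $d(X_0,X_n)\geq r$ we are already done (this contributes to $\max_{0\le m\le n}\mathbf P_x(d(X_0,X_m)\ge r)$ via $m=n$). Otherwise $d(X_0,X_n) < r$, and combined with $d(X_0,X_T)\geq 2r$ and the triangle inequality this forces $d(X_T,X_n)\geq r$. Now apply the strong Markov property at time $T$: on $\{T\leq n\}$, the walk from time $T$ onward is a lazy random walk started at $X_T$, run for $n-T \leq n$ steps, and the event $d(X_T,X_n)\geq r$ says it has displacement at least $r$ at time $n-T$. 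Hence
\[
\mathbf{P}_x(T\leq n,\ d(X_0,X_n)<r) \leq \mathbf{E}_x\!\left[\mathbbm{1}(T\leq n)\,\mathbf{P}_{X_T}\bigl(d(X_0,X_{n-T})\geq r\bigr)\right] \leq \sup_y \max_{0\leq m\leq n}\mathbf{P}_y\bigl(d(X_0,X_m)\geq r\bigr),
\]
using $\mathbf P_x(T\le n)\le 1$ in the last step. Adding the two contributions gives the factor of $2$.

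The only mild subtlety — and the step I'd be most careful about — is the measurability/strong-Markov bookkeeping: $T$ is a stopping time for the (lazy) random walk, $n-T$ is $\mathcal F_T$-measurable and lies in $\{0,1,\dots,n\}$, and conditionally on $\mathcal F_T$ the increment process is a fresh lazy walk from $X_T$, so $\mathbf P(d(X_T,X_n)\ge r\mid \mathcal F_T) = \mathbf P_{X_T}(d(X_0,X_{n-T})\ge r)$ on $\{T\le n\}$, which is bounded by $\sup_y\max_{0\le m\le n}\mathbf P_y(d(X_0,X_m)\ge r)$. Everything else is the triangle inequality. Taking the supremum over $x$ on the left-hand side at the end yields exactly the claimed inequality.
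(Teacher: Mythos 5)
Your proof is correct and uses essentially the same argument as the paper: stop at the first time $\tau$ (your $T$) at which $d(X_0,X_\tau)\ge 2r$, use the triangle inequality to conclude that either $d(X_0,X_n)\ge r$ or $d(X_\tau,X_n)\ge r$, and control the latter term via the strong Markov property at $\tau$. The paper writes the same decomposition as a single two-line chain of inequalities, but the stopping time, the triangle-inequality step, and the Markov re-rooting are identical.
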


\begin{proof}[Proof of \cref{lem:triangle}]
Let $r\geq 1$ and let $\tau$ be the first time that $d(X_0,X_\tau)\geq 2r$, setting $\tau=\infty$ if this never occurs. We have by the strong Markov property that
\begin{align*}
\mathbf{P}_x(d(X_0,X_m) \geq r) &\geq \mathbf{P}_x(\max_{0\leq s \leq m} d(X_0,X_s) \geq 2r) - \mathbf{E}_x\left[ \mathbf{P}_{X_\tau} (d(X'_0,X'_{m-\tau}\geq r) \right]
\\
&\geq \mathbf{P}_x(\max_{0\leq s \leq m} d(X_0,X_s) \geq 2r) - \sup_{y} \sup_{0\leq k \leq m} \mathbf{P}_y(d(X_0,X_{k})\geq r)
\end{align*}
for every $x\in V$, $r\geq 1$, and $0\leq m \leq n$, where the inner probability in the last term on the first line is with respect to a new random walk started at $X_\tau$. 
\end{proof}

\begin{corollary}
\label{cor:expectation_and_median}
    Let $G=(V,E)$ be a locally finite graph with degrees bounded by $M\geq 2$ and let $d$ be a metric on $V$. We have that
    \begin{equation*}
    \sup_x \sup_{0\leq m \leq n} \mathbf{P}_x \left(d(X_0,X_m)\geq \frac{1}{16} \sup_y \mathbf{E}_y \left[\max_{0\leq \ell \leq n} d(X_0,X_\ell)\right] \right) \geq \frac{c}{M\log M} 
    \end{equation*}
    and hence that
    \begin{equation*}
    \sup_x \sup_{0\leq m \leq n} \mathbf{E}_x \left[d(X_0,X_m)\right] \geq \frac{c}{16M\log M} \sup_y \mathbf{E}_y \left[\max_{0\leq m \leq n} d(X_0,X_m)\right]
    \end{equation*}
    for every $n\geq 1$, where $c$ is a positive universal constant.
\end{corollary}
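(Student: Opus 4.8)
The plan is to deduce \cref{cor:expectation_and_median} from \cref{lem:Green_metric_upper_tail} and \cref{lem:triangle} via a layer-cake computation. Write $E := \sup_y \mathbf{E}_y\bigl[\max_{0\le \ell \le n} d(X_0,X_\ell)\bigr]$ for the quantity on the right-hand side of both inequalities; since $\max_{0\le m\le n}d(X_0,X_m)\le \sum_{m=1}^n d(X_0,X_m)$ we have $E\le n\,\sup_x\max_{0\le m\le n}\mathbf{E}_x[d(X_0,X_m)]$, so the second inequality is trivially true when $E=\infty$ and we assume $E<\infty$. The crux is the estimate
\[
p:=\sup_x \mathbf{P}_x\Bigl(\max_{0\le m\le n} d(X_0,X_m) \ge \tfrac18 E\Bigr) \ge \frac{c_0}{M\log M}
\]
for a universal constant $c_0>0$. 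Granting this, the first inequality of the corollary is immediate from \cref{lem:triangle} applied with $r=E/16$, whose left-hand side it bounds below by $\tfrac12 p$; and the second inequality follows from the first by Markov's inequality, since $\mathbf{E}_x[d(X_0,X_m)]\ge \tfrac{E}{16}\,\mathbf{P}_x(d(X_0,X_m)\ge \tfrac{E}{16})$ for every $x$ and $m$, and one takes the supremum over $x$ and $0\le m\le n$ on both sides (adjusting the constant).

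To prove the estimate on $p$, fix $\eps>0$ and pick $x$ with $\mathbf{E}_x[Y]\ge(1-\eps)E$, where $Y:=\max_{0\le m\le n}d(X_0,X_m)$; this is legitimate because $Y$ is a bounded random variable for each fixed starting point (the graph is locally finite and $n$ finite). By the layer-cake formula together with a three-way split of the integral at $E/8$ and at $TE$, where $T\ge 1$ is a parameter to be chosen of order $M\log M$, using $\mathbf{P}_x(Y>t)\le 1$ on $[0,E/8)$ and $\mathbf{P}_x(Y>t)\le \mathbf{P}_x(Y\ge E/8)\le p$ on $[E/8,TE)$,
\[
(1-\eps)E \le \mathbf{E}_x[Y]=\int_0^\infty \mathbf{P}_x(Y>t)\,dt \le \frac{E}{8} + p\,T\,E + \int_{TE}^\infty \mathbf{P}_x(Y>t)\,dt .
\]
For the last term, \cref{lem:Green_metric_upper_tail} gives $\mathbf{P}_x(Y\ge \lambda E)\le \exp[-\lfloor \lambda/(M+e)\rfloor]\le e\,e^{-\lambda/(M+e)}$ for all $\lambda\ge 1$, so integrating bounds it by $e(M+e)e^{-T/(M+e)}E$. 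Dividing through by $E$ and letting $\eps\downarrow 0$ yields $pT\ge \tfrac78 - e(M+e)e^{-T/(M+e)}$; choosing $T=(M+e)\log\bigl(16e(M+e)\bigr)$ makes the subtracted term equal to $\tfrac1{16}$, hence $p\ge \tfrac{13}{16T}$, and since $T\le C M\log M$ for all $M\ge 2$ this is the claimed bound.

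The only mildly delicate point is calibrating the cutoff $T$: the tail bound of \cref{lem:Green_metric_upper_tail} decays only on the scale $\lambda\sim M$, so its far tail contributes an amount of order $ME$ to $\mathbf{E}_x[Y]$ unless one truncates at scale $T\sim M\log M$, at which point the truncated tail becomes a negligible fraction of $E$; this truncation is precisely what produces the $\log M$ loss in the final bound (a cruder second-moment/Paley--Zygmund argument would only give $p\gtrsim 1/M^2$, and hence a bound with $M^2$ in place of $M\log M$). Everything else is routine bookkeeping, and all constants can be made explicit if desired.
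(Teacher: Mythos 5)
Your proof is correct and follows essentially the same approach as the paper: both use \cref{lem:Green_metric_upper_tail} to control the tail of $\max_{0\le m\le n}d(X_0,X_m)$ past a truncation scale of order $M\log M$, deduce a probability lower bound of order $1/(M\log M)$ for that maximum exceeding a constant fraction of its supremal expectation, and then invoke \cref{lem:triangle} to pass from the running maximum to a single time. The paper phrases the truncation step via an indicator split followed by Markov's inequality applied to the nonnegative difference $\lambda_0\mathbf{M}_n-Y\mathbbm{1}(Y\le\lambda_0\mathbf{M}_n)$, whereas you use a three-way layer-cake split at $E/8$ and $TE$; these are the same computation in different clothing.
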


\begin{proof}[Proof of \cref{cor:expectation_and_median}]
For each $n\geq 1$, let $\mathbf{M}_n = \sup_y \mathbf{E}_y\left[\max_{0\leq m \leq n} d(X_0,X_m)\right]$.
Fix $n\geq 1$ and let $x\in V$ be such that
\[
\mathbf{E}_x\left[\max_{0\leq m \leq n} d(X_0,X_m)\right] \geq \frac{1}{2} \mathbf{M}_n.
\]
For each $\lambda \geq 1$, we have by \cref{lem:Green_metric_upper_tail} that
\begin{multline*}
\mathbf{E}_x\left[\max_{0\leq m \leq n} d(X_0,X_m) \,\mathbbm{1}\!\left( \max_{0\leq m \leq n} d(X_0,X_m) \geq \lambda \mathbf{M}_n\right) \right] \leq \mathbf{M}_n \int_\lambda^\infty \exp\left[ -\left\lfloor \frac{t}{M+e}\right\rfloor  \right] \dif t
\\
\leq \frac{e(M+e)}{e-1}\mathbf{M_n} \exp\left[ -\left\lfloor \frac{\lambda}{M+e}\right\rfloor  \right].
\end{multline*}
If $\lambda \geq \lambda_0:= (M+e)\lceil \log \frac{4e(M+e)}{e-1}\rceil$, then the left hand side is at most $\frac{1}{4}\mathbf{M}_n$,  so that
\[\mathbf{E}_x\left[\max_{0\leq m \leq n} d(X_0,X_m) \, \mathbbm{1}\!\left( \max_{0\leq m \leq n} d(X_0,X_m) \leq \lambda_0 \mathbf{M}_n\right) \right] \geq \frac{1}{4}\mathbf{M_n}. \]
It follows from this and Markov's inequality (applied to the difference between the random variable and $\lambda_0 M$) that
\[\mathbf{P}_x\left(\max_{0\leq m \leq n} d(X_0,X_m) \geq \frac{1}{8} \mathbf{M}_n\right) \geq 1-\frac{\lambda_0-1/4}{\lambda_0-1/8} \geq \frac{1}{8\lambda_0}, \]
where $c$ is a positive universal constant. Applying \cref{lem:triangle}, we deduce that
\[
\sup_x \max_{0\leq m \leq n} \mathbf{P}_x\left( d(X_0,X_m) \geq \frac{1}{16} \mathbf{M}_n\right) \geq 1-\frac{\lambda_0-1/4}{\lambda_0-1/8} \geq \frac{1}{16\lambda_0} \geq \frac{c}{M\log M},\]
where $c$ is a positive universal constant. The claim concerning the expectation follows trivially.
\end{proof}

\begin{proof}[Proof of \cref{prop:Green_metric_upper_tail_strong}]
This follows immediately from \cref{lem:Green_metric_upper_tail} and \cref{cor:expectation_and_median}.
\end{proof}

We now prove \cref{thm:upper_tail}. The inequality concerning the displacement is an immediate corollary of \cref{prop:Green_metric_upper_tail_strong}, while the inequality concerning the information will require some more definitions.
For each $t>0$, we write $\mathbf{G}_t(u,v)$ for the probability that the lazy random walk started at $u$ hits $v$ before a geometric random time of mean $t$. It follows by the strong Markov property that $\mathbf{G}_t(u,v) \geq \mathbf{G}_t(u,w)\mathbf{G}_t(w,v)$ for every $t>0$ and $u,v,w\in V$, so that $-\log \mathbf{G}_t(u,v)$ defines a metric on $G$ for each $t>0$, which we call the \textbf{massive Green metric}. (When $G$ is an infinite transient graph, we may set $t=\infty$ to recover the usual Green metric on the graph.) We can compare the massive Green metric with the random walk information using the following elementary inequality, which uses a similar argument to that of \cite[Proposition 3.2]{halberstam2023most}.

\begin{lemma}
\label{lem:information_to_Green}
Let $G=(V,E)$ be a locally finite graph. The inequalities
\[
 \E \left[\frac{\mathbf{G}_t(X_0,X_n)}{P^n(X_0,X_n)}\right] \leq t+1 \qquad \text{ and } \qquad \frac{\mathbf{G}_t(X_0,X_n)}{P^n(X_0,X_n)} \geq \left(1-\frac{1}{t}\right)^n 
\]
hold for every $x\in V$, $n\geq 0$, and $t>0$.
\end{lemma}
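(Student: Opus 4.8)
The plan is to represent the geometric time as an independent killing clock, after which the lower bound becomes an event inclusion and the upper bound a one-line counting argument. Fix $x\in V$ and let $X$ be the lazy random walk started at $x$. Let $T$ be a geometric random variable of mean $t$, independent of $X$, realized so that $\mathbf{P}(T>n)=(1-1/t)^n$ for all $n\ge0$ (equivalently, kill the walk with probability $1/t$ at each step); up to a harmless convention we then have $\mathbf{G}_t(u,v)=\mathbf{P}_u(\tau_v<T)$, where $\tau_v=\inf\{k\ge0:X_k=v\}$ and the probability averages over both the walk and the independent clock $T$. We may assume $t\ge 1$, since for $t<1$ this construction is degenerate and both inequalities hold trivially.

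For the second inequality, I would use the event inclusion $\{X_n=v\}\cap\{T>n\}\subseteq\{\tau_v<T\}$, valid because on the left-hand event $\tau_v\le n<T$. Independence of $X$ and $T$ then gives
\[
\mathbf{G}_t(x,v)=\mathbf{P}_x(\tau_v<T)\ \ge\ \mathbf{P}_x(X_n=v)\,\mathbf{P}(T>n)\ =\ P^n(x,v)\,(1-1/t)^n ,
\]
and dividing by $P^n(x,v)$ --- which is positive whenever $\mathbf{P}_x(X_n=v)>0$ --- yields $\mathbf{G}_t(X_0,X_n)/P^n(X_0,X_n)\ge(1-1/t)^n$ almost surely.

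For the first inequality, I would expand the expectation over the value of $X_n$:
\[
\mathbf{E}_x\!\left[\frac{\mathbf{G}_t(X_0,X_n)}{P^n(X_0,X_n)}\right]
=\sum_{v\,:\,P^n(x,v)>0} P^n(x,v)\cdot\frac{\mathbf{G}_t(x,v)}{P^n(x,v)}
=\sum_{v\,:\,P^n(x,v)>0}\mathbf{G}_t(x,v)\ \le\ \sum_{v\in V}\mathbf{G}_t(x,v).
\]
By Tonelli's theorem, $\sum_{v\in V}\mathbf{G}_t(x,v)=\mathbf{E}_x\bigl[\#\{v\in V:\tau_v<T\}\bigr]$, and the set counted is exactly the collection of distinct vertices visited by $X_0,X_1,\dots,X_{T-1}$, of size at most $T$; hence the whole quantity is at most $\mathbf{E}[T]=t\le t+1$.

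There is no substantive obstacle here: this is an elementary lemma. The only care needed is bookkeeping around the precise convention for the killed walk --- one wants $\mathbf{P}(T>n)=(1-1/t)^n$ and $\mathbf{E}[T]\le t+1$ to hold simultaneously, and the slack of $1$ in the statement is exactly what absorbs off-by-one ambiguities (whether "hits $v$ before time $T$" means $\tau_v<T$ or $\tau_v\le T$, and whether one counts $X_0,\dots,X_{T-1}$ or $X_0,\dots,X_T$) --- together with noting that the ratio $\mathbf{G}_t/P^n$ need only be controlled at pairs $(x,v)$ with $P^n(x,v)>0$, which is automatic both inside the expectation and in the almost-sure bound. (The same killing representation also delivers the supermultiplicativity $\mathbf{G}_t(u,v)\ge\mathbf{G}_t(u,w)\mathbf{G}_t(w,v)$ quoted in the surrounding text, via the strong Markov property at $\tau_w$ and the memorylessness of $T$, though that is not needed for the lemma itself.)
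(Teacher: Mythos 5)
Your proof is correct. The lower bound is the same as the paper's: the inclusion $\{X_n=v\}\cap\{T>n\}\subseteq\{\tau_v<T\}$ is exactly the ``trivial'' inequality $\mathbf{G}_t(u,v)\ge(1-1/t)^n P^n(u,v)$ the paper invokes. For the upper bound you and the paper both begin with the same cancellation — the density $P^n(x,\cdot)$ cancels against the denominator, reducing the expectation to a sum over vertices $v$ with $P^n(x,v)>0$ — but you finish differently. The paper bounds $\mathbf{G}_t(x,v)$ by the expected number of visits $\sum_{m\ge 0}P^m(x,v)\,\P(T\ge m)$, swaps the order of summation, and uses that $\mathbf{E}_x\bigl[P^m(X_0,X_n)/P^n(X_0,X_n)\bigr]=\mathbf{P}_x\bigl(P^m(X_0,X_n)>0\bigr)\le 1$ for each $m$, getting $\sum_m\P(T\ge m)=t+1$. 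You instead keep $\sum_v\mathbf{G}_t(x,v)$ intact and recognize it as the expected number of distinct vertices visited before the killing time $T$, which is deterministically at most $T$, giving the (slightly sharper) bound $\mathbf{E}[T]=t$. Your endgame is arguably cleaner and more probabilistic — ``the range of a walk run for $T$ steps has size at most $T$'' — whereas the paper's decomposition over time slices $m$ makes the role of the per-step inequality $\mathbf{E}_x[P^m/P^n]\le 1$ explicit, which is the form that gets reused conceptually elsewhere. Both are valid, and the difference is one of bookkeeping rather than substance.
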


\begin{proof}[Proof of \cref{lem:information_to_Green}]
The lower bound follows trivially from the fact that $\mathbf{G}_t(u,v) \geq (1-1/t)^n P^n(u,v)$. For each $m\geq 0$, we also have that 
\[
\mathbf{E}_x \left[\frac{P^m(X_0,X_n)}{P^n(X_0,X_n)}\right] = \sum_{y\in V} \frac{P^m(x,y)}{P^n(x,y)}P^n(x,y) \mathbbm{1}(P^n(x,y)\neq 0) = \mathbf{P}_x(P^m(X_0,X_n)>0) \leq 1,
\]
and the claimed upper bound follows by summing this estimate over $m$, weighted by the probability that our mean-$t$ geometric random variable is at least $m$.
\end{proof}

\begin{proof}[Proof of \cref{thm:upper_tail}]
The bound concerning the displacement is an immediate consequence of \cref{prop:Green_metric_upper_tail_strong}. The bound concerning the information follows from \cref{prop:Green_metric_upper_tail_strong} applied to the massive Green metric $d(x,y)=-\log \mathbf{G}_n(x,y)$. Indeed, it follows from \cref{lem:information_to_Green} and Jensen's inequality that
\[
\mathbf{E}_x[-\log P^m(X_0,X_m)] - \log (1+n) \leq \mathbf{E}_x[-\log \mathbf{G}_n(X_0,X_m)] \leq \mathbf{E}_x[-\log P^m(X_0,X_m)] + C
\]
for a universal constant $C$, and that
\begin{equation}
\mathbf{P}_x\left(-\log P^m(X_0,X_m) \geq - \log \mathbf{G}_n(X_0,X_m) + \mu \right) \leq \mathbf{P}_x\left( \frac{G_n(X_0,X_m)}{P^m(X_0,X_m)} \geq e^\mu \right) \leq (n+1) e^{-\mu}
\label{eq:info_larger_than_green_tail}
\end{equation}
for every $\mu>0$.
Thus, we have by a union bound that
\begin{align*}
&\mathbf{P}_x\left(-\log P^m(X_0,X_m) \geq -\lambda \left(H_n^*+ \log (n+1) \right)+\mu\right)
\\ 
&\hspace{3.5cm}\leq \mathbf{P}_x\left(-\log P^m(X_0,X_m) \geq -\lambda \left(\sup_y \max_{0\leq m \leq n} \mathbf{E}_y[-\log \mathbf{G}_n(X_0,X_m)] \right)+\mu\right)
\\&\hspace{3.5cm}\leq \mathbf{P}_x\left(-\log \mathbf{G}_n(X_0,X_m) \geq -\lambda \left(\sup_y \max_{0\leq m \leq n} \mathbf{E}_y[-\log \mathbf{G}_n(X_0,X_m)] \right)\right)
\\
&\hspace{7cm}+
\mathbf{P}_x\left(-\log P^m(X_0,X_m) \geq - \log \mathbf{G}_n(X_0,X_m) + \mu \right)
\\&\hspace{3.5cm}\leq e^{1-c\lambda} + (n+1)e^{-\mu}
\end{align*}
for some constant $c(M)>0$, where the final inequality follows from \cref{prop:Green_metric_upper_tail_strong} and \eqref{eq:info_larger_than_green_tail}. This is easily seen to imply the claim.
\end{proof}

\section{Construction of sets of small boundary}
 \label{construction-of-subset}
We now describe the construction of sets of small boundary-to-volume ratio used to prove \cref{thm:main}. The proof will rely on the following ``simultaneous near-optimal coupling'' theorem of Angel and Spinka \cite{angel2019pairwise}.

\begin{theorem}[{\!\!\cite{angel2019pairwise}, Theorem 2}] \label{angel-spinka}
    Let $\mathcal{S}$ be a countable collection of probability measures, all absolutely continuous with respect to a common $\sigma$-finite measure. Then there exists a coupling $\P$ of all the measures in $\mathcal{S}$ such that, for any two measures $\P_X,\P_Y\in \mathcal{S}$, the two random variables $X$ and $Y$ with distributions $\P_X$ and $\P_Y$ in the coupling satisfy
    \begin{align}
        \P(X\neq Y) \leq \frac{2||\P_X-\P_Y||_{\tv}}{1+||\P_X-\P_Y||_{\tv}} \leq 2||\P_X-\P_Y||_{\tv}.
    \end{align}
\end{theorem}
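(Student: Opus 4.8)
The plan is to prove the theorem by an explicit ``exponential clock'' (Poisson soup) coupling. Write $\lambda$ for the common $\sigma$-finite reference measure and $f_i:=d\P_{X_i}/d\lambda$ for the densities of the measures in $\mathcal S$, and let $\Pi$ be a Poisson point process on $X\times(0,\infty)$ with intensity $\lambda\otimes\mathrm{Leb}$. For each $i$ define the \emph{$i$-score} $a_i(v,t):=t/f_i(v)$ (with the convention $t/0:=\infty$) and let $X_i$ be the first coordinate of the point of $\Pi$ minimizing $a_i$; this minimizer exists and is almost surely unique because $\{a_i<s\}$ has finite $\Pi$-mass $s\int f_i\,d\lambda=s$. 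A standard Poisson computation shows $X_i\sim\P_{X_i}$: for measurable $A\subseteq X$, the minimum of $a_i$ over those points of $\Pi$ with first coordinate in $A$ is $\mathrm{Exp}(\P_{X_i}(A))$, these minima are independent over a partition of $X$, and so the global minimizer has first coordinate in $A$ with probability exactly $\P_{X_i}(A)$. Since the single process $\Pi$ defines all the $X_i$ at once, it remains only to prove the pairwise bound.

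Fix two indices; abbreviate $f=f_i$, $g=f_j$, $X=X_i$, $Y=X_j$, $m=\min(f,g)$, and $\delta=\|\P_X-\P_Y\|_{\mathrm{TV}}=\tfrac12\int|f-g|\,d\lambda$, and assume $\delta<1$ (the bound being trivial otherwise). Consider the \emph{max-score} $c(v,t):=\max(a_i(v,t),a_j(v,t))=t/m(v)$ and let $q=(v_q,t_q)$ be its (almost surely unique) minimizer over $\Pi$. The key structural observation is that if $q$ also minimizes both $a_i$ and $a_j$ over $\Pi$, then $X=v_q=Y$; hence
\[
\P(X=Y)\ \ge\ \P\bigl(q\text{ minimizes both }a_i\text{ and }a_j\bigr),
\]
so it suffices to lower-bound the right-hand side. (Conversely, any point minimizing both $a_i$ and $a_j$ must minimize $c$, so this is an equality when $\lambda$ has no atoms.)

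Condition on $q$. Since $\int m\,d\lambda=1-\delta$, the restriction property of Poisson processes gives that $c(q)=t_q/m(v_q)=:s$ is $\mathrm{Exp}(1-\delta)$, that $v_q$ has density $m/(1-\delta)$ with respect to $\lambda$ and is independent of $s$, and that, conditionally on $(v_q,s)$, the points $\Pi\setminus\{q\}$ form a Poisson process of intensity $\mu_\Pi:=\lambda\otimes\mathrm{Leb}$ restricted to $\{(v,t):t>s\,m(v)\}$. The point $q$ minimizes both $a_i$ and $a_j$ exactly when $\Pi\setminus\{q\}$ has no point in
\[
R_q:=\bigl\{(v,t):\ s\,m(v)<t<\max\bigl(a_i(q)f(v),\,a_j(q)g(v)\bigr)\bigr\},
\]
the union over $v$ of two $t$-intervals with common left endpoint $s\,m(v)$. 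Substituting $t_q=s\,m(v_q)$ (so $a_i(q)=s\,m(v_q)/f(v_q)$ and $a_j(q)=s\,m(v_q)/g(v_q)$), one computes $\mu_\Pi(R_q)=s\,I(f(v_q),g(v_q))$, where $I(\alpha,\beta):=\int\bigl(\min(\alpha,\beta)\max(f(v)/\alpha,g(v)/\beta)-m(v)\bigr)_+\,d\lambda(v)$. Consequently, using the independence of $s$ and $v_q$, the identity $\E[e^{-sz}]=(1-\delta)/(1-\delta+z)$ for $s\sim\mathrm{Exp}(1-\delta)$ and $z\ge0$, and Jensen's inequality applied to the convex function $z\mapsto(1-\delta)/(1-\delta+z)$,
\[
\P(X=Y)\ \ge\ \E\bigl[e^{-sI(f(v_q),g(v_q))}\bigr]\ =\ \E\!\left[\frac{1-\delta}{1-\delta+I(f(v_q),g(v_q))}\right]\ \ge\ \frac{1-\delta}{1-\delta+\E\bigl[I(f(v_q),g(v_q))\bigr]}.
\]

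It remains to bound $\E[I(f(v_q),g(v_q))]$. Since $v_q$ has density $m/(1-\delta)$ and $m(v_q)=\min(f(v_q),g(v_q))$,
\[
(1-\delta)\,\E\bigl[I(f(v_q),g(v_q))\bigr]=\int\!\!\int m(u)\Bigl(m(u)\max\bigl(f(v)/f(u),\,g(v)/g(u)\bigr)-m(v)\Bigr)_+\,d\lambda(v)\,d\lambda(u).
\]
For any $\alpha,\beta>0$ one has $\min(\alpha,\beta)\max(f(v)/\alpha,g(v)/\beta)\le\max(f(v),g(v))$ (check the cases $\alpha\le\beta$ and $\beta\le\alpha$ separately), so the inner integrand is at most $m(u)\bigl(\max(f(v),g(v))-m(v)\bigr)=m(u)\,|f(v)-g(v)|$ and the double integral factors as $\bigl(\int m\,d\lambda\bigr)\bigl(\int|f-g|\,d\lambda\bigr)=(1-\delta)\cdot 2\delta$. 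Hence $\E[I(f(v_q),g(v_q))]\le2\delta$, so $\P(X=Y)\ge\frac{1-\delta}{1+\delta}$, that is $\P(X\ne Y)\le\frac{2\delta}{1+\delta}\le2\delta$, the desired pairwise bound.

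I expect the main obstacle to be the structural step in the second paragraph — recognizing that agreement of the two winners forces the common winner to minimize the combined max-score $c$ — together with the bookkeeping in the Poisson restriction argument, in particular checking that points $p$ of $\Pi$ with $m(v_p)=0$ (which have infinite $c$-score and so never coincide with $q$) are still correctly accounted for by the region $R_q$. By contrast, the identity $X_i\sim\P_{X_i}$ and the closing double-integral estimate are routine.
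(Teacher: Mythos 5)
The paper cites this statement from Angel--Spinka without reproducing a proof, so there is no internal proof to compare against; your argument correctly recreates the Angel--Spinka ``exponential clock'' Poisson construction and its pairwise analysis (choosing the Poisson point minimizing $t/f_i(v)$, conditioning on the minimizer of the combined score $t/\min(f,g)$, and bounding the void probability of $R_q$ via the Laplace transform of $\mathrm{Exp}(1-\delta)$, Jensen, and the pointwise bound $\min(\alpha,\beta)\max(f(v)/\alpha,g(v)/\beta)\le\max(f(v),g(v))$). All the steps check out, including the handling of the $\delta=1$ and $m(v)=0$ degeneracies, so this is a faithful and correct proof of the cited theorem.
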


We will also need the following elementary lemma.

\begin{lemma}
\label{lem:TV_conditioning}
Let $\P$ and $\Q$ be probability measures on the same measurable space, let $A$ and $B$ be events of positive probability for $\P$ and $\Q$ respectively, and let $\P|_A$ and $\Q|_B$ be the associated conditional measures. Then 
$\|\P|_A-\Q|_A\|_{\operatorname{TV}} \leq \|\P-\Q\|_{\operatorname{TV}} + \P(A^c)+\Q(B^c)$.
\end{lemma}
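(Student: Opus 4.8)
The plan is to reduce everything to the triangle inequality for total variation distance together with the elementary estimate
\[
\|\P-\P|_A\|_{\operatorname{TV}} \le \P(A^c) \qquad \text{ and, symmetrically,}\qquad \|\Q-\Q|_B\|_{\operatorname{TV}} \le \Q(B^c).
\]
(Here I read the conclusion of the lemma as $\|\P|_A-\Q|_B\|_{\operatorname{TV}}\le \|\P-\Q\|_{\operatorname{TV}}+\P(A^c)+\Q(B^c)$, the ``$\Q|_A$'' being a typo for $\Q|_B$.) Granting the two displayed estimates, the triangle inequality gives
\[
\|\P|_A-\Q|_B\|_{\operatorname{TV}} \le \|\P|_A-\P\|_{\operatorname{TV}} + \|\P-\Q\|_{\operatorname{TV}} + \|\Q-\Q|_B\|_{\operatorname{TV}} \le \P(A^c)+\|\P-\Q\|_{\operatorname{TV}}+\Q(B^c),
\]
which is exactly the asserted bound.

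To prove $\|\P-\P|_A\|_{\operatorname{TV}}\le\P(A^c)$, I would exhibit an explicit coupling of a sample $X\sim\P$ with a sample $X'\sim\P|_A$. Assuming $\P(A^c)>0$ (otherwise $\P|_A=\P$ and there is nothing to prove), sample $X'\sim\P|_A$ and then, independently, with probability $\P(A)$ set $X=X'$ and with probability $\P(A^c)$ sample $X$ from the conditional measure $\P|_{A^c}$. The law of $X$ is $\P(A)\,\P|_A+\P(A^c)\,\P|_{A^c}=\P$, and the event $\{X\neq X'\}$ is contained in the event that the second branch was chosen, which has probability $\P(A^c)$; hence $\|\P-\P|_A\|_{\operatorname{TV}}\le\P(X\neq X')\le\P(A^c)$. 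The same argument with $(\Q,B)$ in place of $(\P,A)$ gives the second estimate. (Alternatively, one can observe directly that the Hahn decomposition of the signed measure $\P-\P|_A$ has positive part $\P|_{A^c}$, since $(\P-\P|_A)|_{A^c}=\P|_{A^c}\ge 0$ while $(\P-\P|_A)|_A=(1-\P(A)^{-1})\,\P(\cdot\cap A)\le 0$; this yields equality $\|\P-\P|_A\|_{\operatorname{TV}}=\P(A^c)$, though only the inequality is used.)

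There is no genuine obstacle here: the statement is a soft general-measure-theory fact, and the only mild point to attend to is the degenerate case $\P(A^c)=0$ (or $\Q(B^c)=0$), which is handled trivially as noted above. The lemma is then applied in the sequel in the regime where $A$ and $B$ are the (high-probability) events that the simultaneous near-optimal coupling of \cref{angel-spinka} succeeds in pairing up certain random walks, so that the conditional measures are close precisely because the unconditioned ones are close and the conditioning events have small complement.
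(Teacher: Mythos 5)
Your proof takes essentially the same approach as the paper: bound $\|\P-\P|_A\|_{\operatorname{TV}}$ by $\P(A^c)$ (the paper states this as $1-\P(A)$, the same thing) and conclude by the triangle inequality. You also correctly identify that the statement's $\Q|_A$ is a typo for $\Q|_B$, and your coupling/Hahn-decomposition justification of the elementary estimate simply fills in detail the paper declares ``easily verified.''
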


\begin{proof}[Proof of \cref{lem:TV_conditioning}]
It is easily verified from the definition of total variation that $\|\P-\P|_A\|_{\operatorname{TV}} \leq 1-\P(A)$, and the claim follows by the triangle inequality.
\end{proof}

Let $G=(V,E)$ be a connected, locally finite graph.
For each $x\in V$ and $n\geq 1$, let $\mathbf{P}_{x,n}$ be the law of the $n$th location $X_{x,n}$ of the lazy random walk  started at $x$. 
For each $x\in V$, $n\geq 1$, and $\lambda>0$, let $\mathscr{E}_{x,n,\lambda}$ denote the event that $d(x,X_{x,n})\leq \lambda D_n^*$ and $-\log P^n(x,X_{x,n}) \leq \lambda (H_n^*+\log n)=:\log N_{n,\lambda}$, where we define $N_{n,\lambda}$ so that this equality is satisfied. It follows from \cref{thm:upper_tail} that
\begin{equation}
\label{eq:upper_tail_restated}
\sup_x\P(\mathscr{E}_{x,n,\lambda}^c)  \leq 2e^{-c\lambda},
\end{equation}
where $c=c(M)$ is a positive constant depending only on $M$.
Applying Theorem~\ref{angel-spinka} to the collection of conditional distributions $\mathcal{S}_{n,\lambda}=\{\P_{n,x}|_{\mathscr{E}_{x,n,\lambda}} :x\in V\}$, we see that there is a coupling $\P$ of all these conditioned random walks such that 
\begin{align}
\label{eq:TV_tilde_def}
    \widetilde{\operatorname{TV}}_{n,\lambda}:=\sup_{x\sim y} \P(X_{x,n}\neq X_{y,n}) \leq 2\operatorname{TV}_n + 2 \sup_x\P(\mathscr{E}_{x,n,\lambda}^c) 
\end{align}
(The coupling also gives some control of non-neighboring vertices, but we will not use this.) We use this coupled collection of random walks to define a random equivalence relation $\Pi_{n,\lambda}$, where we set $x$ and $y$ to belong to the same class of $\Pi_{n,\lambda}$ if $X_{x,n}=X_{y,n}$.
We will denote by $[x]_{n,\lambda}$ the set of vertices $y$ that are related to $x$ under $\Pi_{n,\lambda}$. Thus, all the elements $y$ of $[x]_{n,\lambda}$ have a common value of $X_{y,n}$, which we denote by $\sigma_{n,\lambda}(x)=\sigma_{n,\lambda}([x]_{n,\lambda})$.
 Observe that every element of $[x]_{n,\lambda}$ has distance at most $\lambda D_n^*$ from $\sigma_{n,\lambda}(x)$, so that $[x]_{n,\lambda}$ has diameter at most $2\lambda D_n^*$. Moreover, since every element $y$ of $[x]_{n,\lambda}$ has $P^n(\sigma_{n,\lambda}(x),y) \geq N_{n,\lambda}^{-1}$ and $\sum_w P^n(\sigma_{n,\lambda}(x),1)=1$, we must have that $[x]_{n,\lambda}$ contains at most $N_{n,\lambda}$ points. We also have trivially that $[x]_{n,\lambda}$ has diameter at most $2n$ and volume at most $M^n$. To conclude the proof of the theorem, it therefore suffices to prove that at least one of these cells has boundary-to-volume ratio of order at most $\widetilde{\operatorname{TV}}_{n,\lambda}$ with positive probability. Taking $\lambda$ of order $-\log \operatorname{TV}_n$ will then ensure that this is of the same order as $\operatorname{TV}_n$. 

\medskip


The following key lemma, which is based implicitly on the \emph{mass-transport principle}, allows us to bound the expected boundary-to-volume ratio of the cell at a uniform random point. For infinite graphs, we make sense of this by restricting to a large finite set.

\begin{lemma}
\label{lem:finite_MTP}
Suppose that $G=(V,E)$ is a locally finite graph and let $F$ be a finite subset of $V$. Then
\[
\frac{1}{\deg(F)}\sum_{x\in F}\deg(x)\E\left[ \frac{\partial ([x]_{n,\lambda}\cap F)}{\deg([x]_{n,\lambda} \cap F)}\right] \leq \widetilde{\operatorname{TV}}_{n,\lambda} + \frac{|\partial F|}{\deg(F)}
\]
for every $n\geq 1$ and $\lambda>0$.
\end{lemma}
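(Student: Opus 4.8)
The plan is to apply the mass-transport principle on the finite set $F$ using the coupled collection of random walks. For each realization of the coupling, and each ordered pair $(x,y)$ with $x,y\in F$, I would send mass from $x$ to $y$ according to a transport rule that, summed appropriately, reconstructs the boundary-to-volume ratios of the cells. Specifically, for a vertex $x\in F$ with cell $C = [x]_{n,\lambda}\cap F$, distribute mass $\deg(x)$ uniformly over the edges incident to $x$ and, for each such edge, "charge" it to the cell containing $x$; the quantity $\deg(x)\,\frac{|\partial(C\cap F)|}{\deg(C\cap F)}$ is exactly the expected (under the uniform measure on $F$, weighted by degree) contribution we want to control, so the left-hand side is $\frac{1}{\deg(F)}\sum_{x\in F}\deg(x)\,\E[\text{that ratio}]$.

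The key step is to identify where the mass can "escape": an edge $\{u,v\}$ contributes to $\partial(C\cap F)$ either because $v\notin F$ (contributing to the $|\partial F|/\deg(F)$ term) or because $v\in F$ but $v\notin C$, i.e.\ $X_{u,n}\neq X_{v,n}$ in the coupling. First I would write, for a fixed coupling realization,
\[
\sum_{x\in F}\deg(x)\frac{|\partial(C_x\cap F)|}{\deg(C_x\cap F)} = \sum_{x\in F}\frac{\deg(x)}{\deg(C_x\cap F)}\sum_{\substack{y\sim x}}\Ind{y\notin C_x\cap F},
\]
where $C_x=[x]_{n,\lambda}$. The inner sum splits into the $y\notin F$ part and the $y\in F,\ X_{x,n}\neq X_{y,n}$ part. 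For the first part, summing $\deg(x)/\deg(C_x\cap F)$ over all $x$ in a fixed cell gives exactly $1$ (the degrees in the numerator sum to $\deg(C_x\cap F)$), so the total first-part contribution is at most $\sum_{x\in F}\Ind{\exists y\sim x,\ y\notin F}\cdot(\dots)\le |\partial F|$, which after dividing by $\deg(F)$ yields the $|\partial F|/\deg(F)$ term. For the second part, I would again use that within each cell the weights $\deg(x)/\deg(C_x\cap F)$ sum to $1$, so that the cell-structure "averages out" and the second-part contribution is bounded by $\sum_{\{x,y\}: x\sim y,\ x,y\in F} \Ind{X_{x,n}\neq X_{y,n}}\cdot(\text{bounded weight})$; taking expectations, each such indicator has probability at most $\widetilde{\operatorname{TV}}_{n,\lambda}$ by the definition \eqref{eq:TV_tilde_def}, and the weights are arranged so this sums to at most $\widetilde{\operatorname{TV}}_{n,\lambda}\cdot\deg(F)$.

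The main obstacle I anticipate is making the combinatorial bookkeeping with the weights $\deg(x)/\deg(C_x\cap F)$ genuinely rigorous: one must be careful that for each cell $C$, $\sum_{x\in C\cap F}\deg(x) = \deg(C\cap F)$ so the per-cell normalization is exactly $1$, and that the edges counted in $|\partial(C\cap F)|$ are correctly attributed either to $\partial F$ or to a disagreement edge of the coupling without double-counting. The cleanest route is probably to write everything as a genuine mass-transport: define $f(x,y)$ for $x,y\in F$ to be $\deg(x)/\deg([x]_{n,\lambda}\cap F)$ if $x\sim y$ and $y\in[x]_{n,\lambda}$ and $0$ otherwise, observe $\sum_y f(x,y)=\deg(x)$ wait—rather, note $\sum_{y\in[x]\cap F}f(x,y)$ relates to the volume term, and $\sum_x f(y,x)$ relates to the mass received; then $\sum_x[\deg(x)-\sum_y f(x,y)]$ is precisely $\sum_x \deg(x)\frac{|\partial([x]\cap F)|}{\deg([x]\cap F)}$ up to the boundary-of-$F$ correction, and bounding the deficit edge-by-edge gives the $\widetilde{\operatorname{TV}}_{n,\lambda}$ term after taking expectations. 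Once the transport function is set up correctly the inequality falls out by linearity of expectation and the total variation bound; the rest is routine.
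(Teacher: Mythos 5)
Your general plan --- a mass-transport/double-counting argument on $F$, using that within each cell $C$ the weights $\deg(x)/\deg(C)$ sum to $1$, and bounding the probability of an edge-disagreement by $\widetilde{\operatorname{TV}}_{n,\lambda}$ --- is exactly the right idea and matches the paper's approach. However, your first display is false, and this is a genuine gap rather than a bookkeeping nuisance. You wrote
\[
\sum_{x\in F}\deg(x)\frac{|\partial(C_x\cap F)|}{\deg(C_x\cap F)} \;=\; \sum_{x\in F}\frac{\deg(x)}{\deg(C_x\cap F)}\sum_{y\sim x}\mathbbm{1}\{y\notin C_x\cap F\},
\]
but the left side equals $\sum_{\text{cells }C}|\partial C|$ (group by cell and use $\sum_{x\in C}\deg(x)=\deg(C)$), whereas the right side is $\sum_{C}\sum_{x\in C}\frac{\deg(x)}{\deg C}\,|\{y\sim x:y\notin C\}|$, a weighted \emph{average} over $x\in C$ of the number of boundary edges emanating from $x$ --- a quantity bounded by the maximum degree, not by $|\partial C|$. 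The two differ already for a regular graph, where the right side becomes $\sum_C |\partial C|/|C|$. The error is that $|\partial(C_x\cap F)|$ counts boundary edges emanating from \emph{every} vertex of the cell, not only from $x$, so the display is missing an inner sum over the other endpoints inside the cell.

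The fix --- and what the paper does --- is to first expand $|\partial [x]|=\sum_{z\in[x]}\sum_{e^-=z}\mathbbm{1}(e^+\notin[x])$ with a sum over \emph{all} $z$ in the cell, insert this into the left-hand side, and only then swap the roles of $x$ and $z$. After the swap the factor $\deg(x)/\deg([z])$ is summed over $x\in[z]$ and collapses to $1$, leaving $\sum_{z\in F}\sum_{e^-=z}\mathbbm{1}(e^+\notin [z])$ exactly; splitting $\mathbbm{1}(e^+\notin[z]_{n,\lambda}\cap F)\leq\mathbbm{1}(e^+\notin F)+\mathbbm{1}(e^+\notin[z]_{n,\lambda})$ and taking expectations then gives $|\partial F|+\deg(F)\,\widetilde{\operatorname{TV}}_{n,\lambda}$. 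Your observation that the per-cell normalization sums to $1$ is the right lever, but it has to be applied \emph{after} the swap, to the labeling index $x$, not to the index carrying the boundary edges. Your concluding ``cleanest route'' paragraph gestures at a mass-transport function $f(x,y)$ but, as you note yourself mid-sentence, the candidate $f$ does not satisfy the marginal identity you wanted, and the paragraph does not close the gap.
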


When $G$ is finite, we can apply this lemma with $F=V$ so that the second term vanishes.
(An infinite-volume version of this lemma also holds for unimodular random rooted graphs.)

\begin{proof}[Proof of \cref{lem:finite_MTP}]
Lightening notation by writing $[x]=[x]_{n,\lambda} \cap F$, we can write
\begin{align*}
\sum_{x\in F}\deg(x)\E\left[ \frac{\partial [x]}{\deg([x])}\right] &= 
\E\left[ \sum_{x\in F} \deg(x)\sum_{y\in [x]} \frac{1}{\deg([x])}  \sum_{e^-=y} \mathbbm{1}(e^+ \notin [x])\right]
\\&=
\E\left[ \sum_{y\in V} \sum_{x\in [y]} \frac{\deg(x)}{\deg([y])} \sum_{e^-=y} \mathbbm{1}(e^+ \notin [x])\right]
\\ &= \sum_{y\in V} \sum_{e^-=y} \P(e^+ \notin [y]) \leq \sum_{y\in F} \sum_{e^-=y} [\mathbbm{1}(e^+\notin F) + \P(e^+\notin [y]_{n,\lambda})].
\end{align*}
The sum $\sum_{y\in V} \sum_{e^-=y} \mathbbm{1}(e^+\notin F)$ is equal to $|\partial F|$. Meanwhile, the definitions ensure that $\P(e^+ \notin [y]_{n,\lambda}) \leq \widetilde{\operatorname{TV}}_{n,\lambda}$ for every $y\in F$ and oriented edge $e$ emanating from $y$, and the claim follows by dividing both sides by $\deg(F)$.
\end{proof}

\begin{proof}[Proof of \cref{thm:main}]
 If we take $\lambda = C \log \operatorname{TV}_n$ for an appropriately large constant $C=C(M)$, then it follows from \eqref{eq:upper_tail_restated} that $\widetilde{\operatorname{TV}}_{n,\lambda}\leq 2 \operatorname{TV}_n$. 
Moreover, since $G$ is amenable, there exists a finite set of vertices $F$ such that $|\partial F|/\deg(F)\leq 2 \operatorname{TV}_n$. (When $G$ is finite we can take $F=V$.)
 Thus, by \cref{lem:finite_MTP}, if $x$ is a uniform random vertex of $F$, then the expected boundary-to-volume ratio of the intersection $[x]_{n,\lambda}\cap F$ in the construction above is at most $4\operatorname{TV}_n$; this trivially implies that set satisfying the prescribed geometric constraints must exist. \qedhere
\end{proof}

\subsection*{Acknowledgements} 
This paper is the outcome of a summer undergraduate research project in 2024 as part of Caltech's WAVE program, where I.M.L.\ was mentored by T.H.\ and Antoine Song;
we thank Antoine Song for many valuable discussions throughout the course of the summer. I.M.L.\ was supported by a Carl F.\ Braun research fellowship, and thanks Caltech for its hospitality this summer. T.H.\ is supported by NSF grant DMS-2246494 and a Packard Fellowship in Science and Engineering. The idea for this project emerged during Stochastic Processes and Related Fields 2023 at RIMS, Kyoto, and T.H.\ thanks Justin Salez for inspiring conversations at that time.

\footnotesize{
\bibliographystyle{abbrv}
\bibliography{big_bib_file}
}

\end{document}